\numberwithin{equation}{section}
\newtheorem{thm}{Theorem}[section]
\newtheorem{prop}[thm]{Proposition}
\newcommand{ \WL }{ \mathbb{W}(\mathbb{L}) }
\newcommand{\WLCat}{\mathbb{W}(\mathbb{L}) \mathchar`-\mathbf{Cat}}
\newcommand{\LCat}{\mathbb{L}\mathchar`-\mathbf{Cat}}
\theoremstyle{definition}
\newtheorem{definition}[thm]{Definition}
\newtheorem{rem}[thm]{Remark}
\newcommand{\prc}{\mathbin{\mathpalette\prc@inner\relax}}
\newcommand{\prc@inner}[2]{%
  \vbox{\offinterlineskip\m@th
    \ialign{%
      ##\cr
      \hidewidth\raisebox{-1.5\height}[0pt][0pt]{$#1.$}\hidewidth\cr
      $#1-$\cr
    }%
  }%
}
\begin{document}
\title{On the Witt vectors in the Lawvere quantale}
\author{Ryo Horiuchi}
\date{}

\maketitle

\begin{abstract}
In this note, we investigate the Witt vectors in the min-plus algebra of extended non-negative real numbers, and consider categories enriched over them viewed as a monoidal category.
\end{abstract}

\section{Introduction}
It is well known that topological Hochschild homology relates to Witt vectors in a profound way (\cite{HM} and \cite{BHM}), and this fact has been playing a pivotal role around the algebraic $K$-theory over the past several decades.

With topological Hochschild homology as one of its basic examples, the theory of cyclotomic spectra has been established (\cite{HM}, \cite{BM}, and \cite{NS}) based on a plethora of work in equivariant stable homotopy theory.
There have been numerous contributions to and applications of the theory of cyclotomic spectra.
The theory of $\lambda$-rings, which are defined in terms of Witt vectors, has been playing a great role as well.
In particular, it is a central object in \cite{B1} and \cite{B2}.
Moreover, using the framework of plethystic algebra (\cite{TW}, \cite{BW}, and \cite{B3}), the Witt vectors of commutative rigs have also been studied in \cite{B3} and \cite{BG}.

Recently, the notion of categorical spectra has been established as an extension of spectra \cite{S}.
There, categorical spectra are defined as the stabilisations of $(\infty, \infty)$-categories instead of $(\infty, 0)$-categories.
Moreover, higher algebras accordingly have been extended to the categorical one \cite{Masuda}.
Therefore, it has become possible to analyse a broader class of algebras in terms of homotopy theory.
In particular, while rigs of characteristic one are not objects of higher algebra, they can be studied within the extended higher algebra.
It might be natural to expect that the relationship between topological Hochschild homology and Witt vectors as well as that between cyclotomic spectra and $\lambda$-rings would lift to this extended framework.

In this note, we explore how Witt vectors behave for a specific rig of characteristic one.
More precisely, we investigate the Witt vectors in the min-plus algebra $[0, \infty]$ and relate them to metric spaces in the sense of Lawvere \cite{L}.
We hope this work will contribute to the theory of cyclotomic categorical spectra, which, to the best of our knowledge, has not yet been explored much.

\section{Witt vectors and Lawvere metric spaces}
\subsection{Witt vectors}

In this section, we briefly recall the basics of the theory of Witt vectors, following \cite{B3} and \cite{BW}, as preparation for later sections.

Let $\Lambda$ denote the set of symmetric functions with coefficients in $\mathbb{N}$.
As is shown in \cite{B3}, $\Lambda$ is a (commutative and co-commutative) birig, namely a commutative rig object in $\mathsf{CRig}^{op}$, where $\mathsf{CRig}$ denotes the category of commutative rigs and rig homomorphisms, and $\mathsf{CRig}^{op}$ its opposite category.
In the usual way, $\Lambda$ is a commutative rig, and with the following structures, it gives rise to a birig.
See \cite{B3} for more detail:

\[
\Delta^{+} : \Lambda \to \Lambda \otimes_{\mathbb{N}} \Lambda,
\]
\[
\Delta^{+}(f(x_1, x_2, \dots)) =
  f(x_1 \otimes 1, 1 \otimes x_1, 
    x_2 \otimes 1, 1 \otimes x_2, 
    \dots, 
    x_i \otimes 1, 1 \otimes x_i, 
    \dots)
\]

\[
\varepsilon^{+} : \Lambda \to \mathbb{N},
\qquad
\varepsilon^{+}(f(x_1, x_2, \dots)) = f(0,0,0,\dots)
\]

\[
\Delta^{\times} : \Lambda \to \Lambda \otimes_{\mathbb{N}} \Lambda,
\]
\[
\Delta^{\times}(f(x_1, x_2, \dots)) =
  f(x_1 \otimes x_1, x_1 \otimes x_2, 
    \dots, 
    x_i \otimes x_j, 
    \dots)
\]

\[
\varepsilon^{\times} : \Lambda \to \mathbb{N},
\qquad
\varepsilon^{\times}(f(x_1, x_2, \dots)) = f(1,0,0,\dots),
\]
where $\Delta^{+}$, $\varepsilon^{+}$, $\Delta^{\times}$, and $\varepsilon^{\times}$ will be called co-addition, co-additive unit, co-multiplication, and co-multiplicative unit, respectively.

Let $(S, +_S, \cdot_S)$ be a commutative rig and $\mathbb{W}(S)$ denote the set $\operatorname{Hom}_{\mathbf{CRig}}(\Lambda, S)$ of rig homomorphisms.
We give $\mathbb{W}(S)$ the following rig structure:

Let $f, g\in\mathbb{W}(S)$. 
The addition $f\oplus_{\mathbb{W}(S)}g\in\mathbb{W}(S)$ is given by
\[(f\oplus_{\mathbb{W}(S)}g)(\varphi)=\cdot_S\circ(f\otimes g)\circ\Delta^{+}(\varphi)\]
for any $\varphi\in\Lambda$.
The additive unit is induced by $\varepsilon^+$.
Similarly, the multiplication $f\otimes_{\mathbb{W}(S)}g\in\mathbb{W}(S)$ is given by
\[(f\otimes_{\mathbb{W}(S)}g)(\varphi)=\cdot_S\circ(f\otimes g)\circ\Delta^{\times}(\varphi)\]
for any $\varphi\in\Lambda$ and the multiplicative unit is induced by $\varepsilon^\times$.

\begin{thm}[\cite{B3}]
This gives rise to a functor $\mathbb{W}:\mathbf{CRig}\to \mathbf{CRig}$.
\end{thm}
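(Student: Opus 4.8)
The statement packages two claims: that $\mathbb{W}(S)$ is a commutative rig for every commutative rig $S$, and that $h\mapsto h\circ(-)$ makes $\mathbb{W}$ a functor. The plan is to deduce both, with essentially no further computation, from the one fact recalled above from \cite{B3}: that $\Lambda$ is a commutative (and co-commutative) birig, i.e.\ a commutative rig object of $\mathbf{CRig}^{op}$, with structure maps $\Delta^{+},\varepsilon^{+},\Delta^{\times},\varepsilon^{\times}$.

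First I would nail down the two formal facts about $\mathbf{CRig}$ that the argument uses. One: $\mathbf{CRig}$ has finite coproducts, given by $-\otimes_{\mathbb{N}}-$ with initial object $\mathbb{N}$; hence $\mathbf{CRig}^{op}$ has finite products and $\mathbb{N}$ is its terminal object. Two: for any commutative rig $S$ the fold map $\cdot_{S}\colon S\otimes_{\mathbb{N}}S\to S$ and the unique map $\mathbb{N}\to S$ are rig homomorphisms. The second fact is what makes the formulas in the statement meaningful: $\cdot_{S}\circ(f\otimes g)\circ\Delta^{+}$ is a composite of rig homomorphisms $\Lambda\to\Lambda\otimes_{\mathbb{N}}\Lambda\to S\otimes_{\mathbb{N}}S\to S$, hence lies in $\mathbb{W}(S)$; likewise for $\Delta^{\times}$, and the two units of $\mathbb{W}(S)$ are $\varepsilon^{+}$ and $\varepsilon^{\times}$ followed by $\mathbb{N}\to S$.

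Now comes the substance, which is a Yoneda argument. For any object $X$ of a category $\mathcal{C}$ with finite products, a commutative-rig-object structure on $X$ is the same datum as a lift of the representable functor $\operatorname{Hom}_{\mathcal{C}}(-,X)\colon\mathcal{C}^{op}\to\mathbf{Set}$ to the category of commutative rigs, naturally in the argument: this holds because each $\operatorname{Hom}_{\mathcal{C}}(Y,-)$ preserves the finite products $X\times X$, $X\times X\times X$ and $*$ in terms of which the rig-object axioms are phrased, so the structure maps of $X$ transport to functions on hom-sets satisfying the rig axioms, and conversely Yoneda recovers the structure maps. I would apply this with $\mathcal{C}=\mathbf{CRig}^{op}$ and $X=\Lambda$. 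Since $\operatorname{Hom}_{\mathbf{CRig}^{op}}(S,\Lambda)=\operatorname{Hom}_{\mathbf{CRig}}(\Lambda,S)=\mathbb{W}(S)$, the birig structure on $\Lambda$ puts a commutative rig structure on $\mathbb{W}(S)$, natural in $S$ regarded as an object of $\mathbf{CRig}^{op}$, i.e.\ covariantly functorial in $S\in\mathbf{CRig}$ --- and this naturality is precisely the assertion that $\mathbb{W}(h)=h\circ(-)$ is a rig homomorphism and that $\mathbb{W}$ respects composition and identities. (Naturality unwinds to $h\circ\cdot_{S}=\cdot_{T}\circ(h\otimes h)$ together with $h$ sending unit to unit, which is just that $h$ is a rig homomorphism.)

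It remains to check that the structure produced abstractly coincides with the one written in the statement, and this dictionary is the one place to be careful. Under the coproduct/product identification $\operatorname{Hom}_{\mathbf{CRig}}(\Lambda\otimes_{\mathbb{N}}\Lambda,S)\cong\mathbb{W}(S)\times\mathbb{W}(S)$, a pair $(f,g)$ corresponds to $\cdot_{S}\circ(f\otimes g)$, so precomposing with the co-addition $\Delta^{+}$ gives exactly $(f,g)\mapsto\cdot_{S}\circ(f\otimes g)\circ\Delta^{+}$, and likewise $\Delta^{\times}$ gives $\otimes_{\mathbb{W}(S)}$ while $\varepsilon^{+},\varepsilon^{\times}$ give the units; the only real risk here is getting the variance and the direction of the coproduct maps right. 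Alternatively one can avoid the abstract language and verify each rig axiom for $\mathbb{W}(S)$ directly, at which point it becomes the corresponding co-axiom for $\Lambda$ (co-commutativity $\rightsquigarrow$ commutativity of $\oplus_{\mathbb{W}(S)}$, co-associativity $\rightsquigarrow$ associativity, the plethystic co-distributivity identity $\rightsquigarrow$ distributivity, and so on), the distributive law being the only computation of any length; but since \cite{B3} already establishes that $\Lambda$ is a birig, the representable-functor route makes all of this automatic and is the one I would take.
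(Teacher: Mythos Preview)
The paper does not give its own proof of this theorem: it is stated with attribution to \cite{B3} (and \cite{TW} for the ring case) and no argument is supplied. Your proposal is correct and is in fact the standard argument one finds in the cited sources: once $\Lambda$ is known to be a commutative rig object in $\mathbf{CRig}^{op}$, the representable-functor/Yoneda translation you give is exactly how one produces the rig structure on $\mathbb{W}(S)$ and its functoriality, and your check that the abstract structure matches the explicit formulas via the coproduct identification $\operatorname{Hom}_{\mathbf{CRig}}(\Lambda\otimes_{\mathbb{N}}\Lambda,S)\cong\mathbb{W}(S)\times\mathbb{W}(S)$ is the right bookkeeping step.
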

This theorem has been proven by \cite{TW} for commutative rings.

The birig $\Lambda$ also has an important structure called composition $\mathbb{N}$-algebra.
As is shown in \cite{B3}, the category of commutative and cocommutative birigs admits a monoidal structure where the monoidal product is denoted by $\odot$ and the unit object is the polynomial birig $\mathbb{N}[X]$.

\begin{definition}[\cite{B3}]A monoid object with respect to this monoidal structure is called a composition $\mathbb{N}$-algebra.
\end{definition}

\begin{thm}[\cite{B3}]The birig $\Lambda$ is a composition $\mathbb{N}$-algebra.
\end{thm}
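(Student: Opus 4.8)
The plan is to construct the monoid structure on $\Lambda$ by passing to the functors of points. Recall from \cite{B3} (cf.\ also \cite{BW}) that, writing $W_A(S)=\operatorname{Hom}_{\mathbf{CRig}}(A,S)$ with its birig-induced rig structure, the product $\odot$ represents composition of these functors, $W_{A\odot B}\cong W_A\circ W_B$ naturally in $S$, while $W_{\mathbb{N}[X]}\cong\mathrm{id}_{\mathbf{CRig}}$; moreover $A\mapsto W_A$ is (contravariantly) fully faithful and monoidal. Hence a monoid structure $m\colon\Lambda\odot\Lambda\to\Lambda$, $e\colon\mathbb{N}[X]\to\Lambda$ on $\Lambda$ is precisely the same datum as a comonad structure $(\mathbb{W},\delta,\epsilon)$ on $\mathbf{CRig}$, where $\delta\colon\mathbb{W}\Rightarrow\mathbb{W}\mathbb{W}$ and $\epsilon\colon\mathbb{W}\Rightarrow\mathrm{id}$ are natural transformations with rig-homomorphism components. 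So it suffices to produce such $\epsilon$ and $\delta$ and to verify the comonad axioms.

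For the counit I would take the degree-one element $p_1=e_1=h_1=x_1+x_2+\cdots\in\Lambda$ and set $e(X)=p_1$, equivalently $\epsilon_S(f)=f(p_1)$. That $e$ is a birig homomorphism is the short verification $\Delta^{+}(p_1)=p_1\otimes1+1\otimes p_1$, $\Delta^{\times}(p_1)=p_1\otimes p_1$, $\varepsilon^{+}(p_1)=0$, $\varepsilon^{\times}(p_1)=1$, matching the corresponding structure maps of $\mathbb{N}[X]$; read through the displayed formulas for $\oplus_{\mathbb{W}(S)}$ and $\otimes_{\mathbb{W}(S)}$ this says exactly that $f\mapsto f(p_1)$ is a rig homomorphism $\mathbb{W}(S)\to S$.

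For the comultiplication I would use plethysm of symmetric functions. For $\psi\in\Lambda$ the operation $\varphi\mapsto\varphi[\psi]$ (substitute the monomials of $\psi$, with multiplicity, for the variables of $\varphi$) is a rig endomorphism of $\Lambda$, and in its second argument it satisfies the classical ``exponential'' identities
\[
\varphi[\psi_1+\psi_2]=\sum_{(\varphi)}\varphi_{(1)}[\psi_1]\,\varphi_{(2)}[\psi_2],\qquad
\varphi[\psi_1\psi_2]=\sum_{(\varphi)}\varphi^{(1)}[\psi_1]\,\varphi^{(2)}[\psi_2],
\]
\[
\varphi[0]=\varepsilon^{+}(\varphi),\qquad\varphi[1]=\varepsilon^{\times}(\varphi),
\]
where $\Delta^{+}\varphi=\sum_{(\varphi)}\varphi_{(1)}\otimes\varphi_{(2)}$ and $\Delta^{\times}\varphi=\sum_{(\varphi)}\varphi^{(1)}\otimes\varphi^{(2)}$. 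Define $\delta_S(f)\in\mathbb{W}(\mathbb{W}(S))=\operatorname{Hom}_{\mathbf{CRig}}(\Lambda,\mathbb{W}(S))$ by $\delta_S(f)(\psi)(\varphi)=f(\varphi[\psi])$. Rig-linearity of plethysm in the first argument makes $\delta_S(f)(\psi)=f\circ(-[\psi])$ an element of $\mathbb{W}(S)$, and comparing the four displayed identities with the formulas for $\oplus_{\mathbb{W}(S)}$, $\otimes_{\mathbb{W}(S)}$ and the units shows that $\psi\mapsto\delta_S(f)(\psi)$ is a rig homomorphism $\Lambda\to\mathbb{W}(S)$, so $\delta_S(f)\in\mathbb{W}(\mathbb{W}(S))$. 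Invariantly, all of this is encoded by a single ``universal plethystic element'' $u\in\mathbb{W}(\mathbb{W}(\Lambda))$, namely $u(\psi)=(-[\psi])$, which determines $\delta$; let $m\colon\Lambda\odot\Lambda\to\Lambda$ be the corresponding birig map.

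It then remains to check the comonad axioms. The two counit laws reduce to $p_1[\psi]=\psi$ and $\varphi[p_1]=\varphi$, i.e.\ $p_1$ is a two-sided unit for plethysm, and coassociativity reduces to associativity of plethysm, $(\varphi[\psi])[\chi]=\varphi[\psi[\chi]]$. I expect the main obstacle to be the point glossed over above: showing that $m$ is a \emph{birig} homomorphism, equivalently that $\delta$ is a morphism of $\mathbf{CRig}$-valued functors (each $\delta_S$ a rig homomorphism $\mathbb{W}(S)\to\mathbb{W}(\mathbb{W}(S))$, natural in $S$). This comes down to checking that plethysm is simultaneously compatible with $\Delta^{+}$, $\Delta^{\times}$, $\varepsilon^{+}$ and $\varepsilon^{\times}$, a finite but intricate diagram chase inside the birig $\Lambda$. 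One further point deserves care in this characteristic-one context: unlike the situation over $\mathbb{Z}$, one cannot reduce to torsion-free coefficients and argue via ghost components, so the plethysm identities above must be confirmed to hold already over $\mathbb{N}$ — which they do, being governed by the manifestly $\mathbb{N}$-coefficiented combinatorics of monomial substitution.
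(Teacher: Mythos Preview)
The paper does not give its own proof of this statement: it is quoted as a theorem of Borger \cite{B3}, and the surrounding text only records that the monoid operation is plethysm and that the unit $\mathbb{N}[X]\to\Lambda$ sends $X$ to $m_{(1)}=p_1$. Your sketch reconstructs exactly this structure via the functor-of-points/comonad translation and identifies the correct verifications (the plethysm identities, the two-sided unit property of $p_1$, associativity of plethysm, and the birig compatibility of $m$), so it is correct and in line with both the paper's description and the argument in \cite{B3}, \cite{BW}.
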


The monoid operation $\circ:\Lambda\odot\Lambda\to\Lambda$ is the one known as plethysm or composition in \cite{Macdonald}.
The unit map $\mathbb{N}[X]\to\Lambda$ is induced by $X\mapsto m_{(1)}$.

By this structure, the functor $\mathbb{W}:\mathbf{CRig}\to \mathbf{CRig}$ gives rise to a comonad.
Namely, the structure map, for any commutative rig $S$, $\mathbb{W}(S)\to\mathbb{W}(\mathbb{W}(S))$ is given by $f\mapsto \tilde{f}$, which is given by $\tilde{f}(\varphi)=f(\mathchar`-\circ\varphi)\in\mathbb{W}(S)$ for any $\varphi\in\Lambda$.

Restricting to commutative rings and abusing notation, $\mathbb{W}:\mathbf{CRing}\to \mathbf{CRing}$ is also a comonad, where $\mathbf{CRing}$ denotes the category of commutative rings and ring homomorphisms.

\begin{definition}[\cite{TW}]A coalgebra over the comonad $\mathbb{W}:\mathbf{CRing}\to \mathbf{CRing}$ is called a $\lambda$-ring.
\end{definition}

In some literature, the $\lambda$-ring structure of a $\lambda$-ring $R$ is defined in terms of endomaps on $R$ indexed by $\mathbb{N}$.
Such a definition and the one given above are connected via the elementary symmetric functions.
See \cite{TW} for more detail.
$\lambda$-rings have been playing a prominent role around the algebraic $K$-theory.
It is also crucial in Borger's absolute geometry \cite{B1} and \cite{B2}.
It would be fair to expect coalgebras over the comonad $\mathbb{W}:\mathbf{CRig}\to \mathbf{CRig}$ could also play a role in math.
However we do not investigate such coalgebras in this paper, except for $\WL$ defined later.

\begin{rem}For any commutative ring $R$, the commutative rig $\mathbb{W}(R)$ is canonically isomorphic to the ring of the  usual (big) Witt vectors in $R$.
Also, as is well known, there is a ring isomorphism
\[\mathbb{W}(R)\to 1+tR[[t]], \varphi\mapsto\sum_{n\geq0}\varphi(e_n)t^n,\]
where $e_0:=1$ and $e_n$ is the $n$-th elementary symmetric function, which plays an important role in the theory of Witt vectors in rings.

However, since the evident map $\mathbb{N}[e_1, e_2, \dots]\to\Lambda$ is not a bijection, the ring isomorphism above does not extend to rig isomorphism in general.
More precisely, we have a map, for a commutative rig $S$, 
\[\operatorname{Hom}_{\mathbf{CRig}}(\mathbb{N}[e_1, e_2, \dots], S)\to 1+tS[[t]], \varphi\mapsto\sum_{n\geq0}\varphi(e_n)t^n.\]
However, we know no natural rig structure on $\operatorname{Hom}_{\mathbf{CRig}}(\mathbb{N}[e_1, e_2, \dots], S)$ in general.
\end{rem}

\subsection{Lawvere metric spaces}
We let $[0, \infty]$ denote the set of non-negative real numbers together with $\infty$, and view it as a linearly ordered set with the order $\leq_{\mathbb{L}}$ defined by
\[x\leq_{\mathbb{L}}y :\Leftrightarrow y \leq x,\]
where $\leq$ is the usual order on $[0, \infty]$.
We also equip $[0, \infty]$ with a monoid structure by the usual addition, denoted here by $\otimes_{\mathbb{L}}$ with the unit $0$.
These two structures are compatible in such a way that they give rise to a monoidal poset, hence a monoidal category, which we denote by $\mathbb{L}$.
It is well known that $\mathbb{L}$ is furthermore a quantale.
We refer to it as the Lawvere quantale.
While we do not discuss it in this paper, quantales are algebraic structures often studied in logic and related fields.
See \cite{AV} for example.

Note that the order structure $\leq_{\mathbb{L}}$ induces a binary operation $\oplus_{\mathbb{L}}$ on $[0, \infty]$ defined by
\[x\oplus_{\mathbb{L}}y:=x\vee y,\]
where $\vee$ denotes the join with respect to $\leq_{\mathbb{L}}$.
In other words, $\oplus_{\mathbb{L}}$ coincides with the minimum operation, $\operatorname{min}$, with respect to the usual order.

The Lawvere quantale $\mathbb{L}$ also determines the commutative rig whose underlying set is $[0, \infty]$, with addition given by $\operatorname{min}$ and multiplication given by $\otimes_{\mathbb{L}}$.
Abusing both terminology and notation, we may also refer to this commutative rig as the Lawvere quantale and denote it by $\mathbb{L}$.
Note that this rig is of characteristic one, namely $0\oplus_{\mathbb{L}}0=0$, which is an object in idempotent analysis.
Note also that $\leq_{\mathbb{L}}$ is recovered by $\oplus_{\mathbb{L}}$:
\[x\leq_{\mathbb{L}}y\Leftrightarrow x\oplus_{\mathbb{L}}z=y \ \text{for some} \ z\in[0, \infty].\]
In this sense, the order and the addition of $\mathbb{L}$ are equivalent.

It may be trivial but important that, for any ring $R$ and $x, y\in R$, we have $y-x\in R$ and $x+(y-x)=y$.
That is, the induced order on $R$ is trivial.

Let $\mathbb{L}\mathchar`-\mathbf{Cat}$ denote the category of (small) categories enriched over $\mathbb{L}$.
Let $X\in\LCat$.
Then, we have a hom-map $d:X\times X\to \mathbb{L}$, where, by abusing notation, we denote $X$ by objects in $X$ and $\mathbb{L}$ by objects in $\mathbb{L}$, which satisfies
\[0\leq_{\mathbb{L}}d(x, x)\]
\[d(x, x')\otimes_{\mathbb{L}}d(x', x'')\leq_{\mathbb{L}}d(x, x''),\]
for any $x, x', x''\in X$.
The first one may be called the identity axiom, and the second one the composition axiom.

In the usual notation, we have $0=d(x, x)$ and $d(x, x')+d(x', x'')\geq d(x, x'')$.
So we can regard $X$ as a kind of metric space.
This beautiful idea is due to Lawvere \cite{L}.
Note that the distance is not necessarily symmetric and there may exist some points $x, x'\in X$ which are not the same but $d(x, x')=0$.
Viewing categories enriched over quantales as a kind of space has long been an active area of research.
See \cite{HST} for more detail.

\begin{rem}
$\mathbb{L}$ itself can be viewed as an $\mathbb{L}$-category with the following distance:
\[d:\mathbb{L}\times\mathbb{L}\to\mathbb{L}, d(x, y)=\max\{y-x, 0\},\]
where $-$ is the extended subtraction and $\max$ is also the usual one.
We may write  $\max\{y-x, 0\}$ as $y\prc x$.
It may be obvious that this truncated subtraction $\prc$ defines the right adjoint to $\otimes_{\mathbb{L}}$, so $\mathbb{L}$ is a closed monoidal category.
\end{rem}

\section{$\WL$-categories}
In this section, viewing $\mathbb{L}$ as a rig, we consider the Witt vectors $\WL$.
We give it a monoidal category structure induced by the structures of $\Lambda$ and $\mathbb{L}$, and consider categories enriched over it.
We would like to think of such categories as a generalization of Lawvere metric spaces.

\subsection{Basic properties of $\WL$-categories}
\begin{definition}For any $n\in\mathbb{N}_{>0}$, we define $\mathbb{P}_{n}$ to be the set of partitions of $n$.
We define $\mathbb{P}$ to be the union of $\mathbb{P}_{n}$ for all $n\in\mathbb{N}_{>0}$.
\end{definition}

Let $f, g\in\mathbb{W}(\mathbb{L})$. 
We define an order $\leq_{\mathbb{W}(\mathbb{L})}$ as follows:
\[f\leq_{\mathbb{W}(\mathbb{L})} g:\Leftrightarrow \forall\lambda\in\mathbb{P}, f(m_{\lambda})\leq_{\mathbb{L}} g(m_{\lambda})\]

We denote the monoidal product on $\mathbb{W}(\mathbb{L})$ by $\otimes_{\mathbb{W}(\mathbb{L})}$ that is the multiplication of the rig $\WL$.
Thus, by definition,
\[(f\otimes_{\mathbb{W}(\mathbb{L})}g)(m_{\lambda})=\min_{\mu, \mu'}\{f(m_{\mu})+g(m_{\mu'})\},\]
where $\mu$ and $\mu'$ are the partitions that appear in the unique expression $\Delta^{\times}(m_{\lambda})=\sum_{\mu, \mu'}c_{\mu, \mu'}^{\lambda}m_{\mu}\otimes m_{\mu'}$.

Let $\overline{0}\in\mathbb{W}(\mathbb{L})$ be the element defined by $\overline{0}(m_{(n)})=0$ for all $(n)\in\mathbb{P}$, where $(n)$ denotes the partition $(n, 0, 0, \dots)$ of $n$, and $\overline{0}(m_{\lambda})=\infty$ for all other $\lambda\in\mathbb{P}$, that is the multiplicative unit of the rig $\WL$.

\begin{prop}
With these structures, $\mathbb{W}(\mathbb{L})$ is a monoidal poset.
\end{prop}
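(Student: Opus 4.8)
The plan is to verify the two ingredients separately: first that $(\mathbb{W}(\mathbb{L}), \leq_{\mathbb{W}(\mathbb{L})})$ is a poset, and second that the monoidal product $\otimes_{\mathbb{W}(\mathbb{L})}$ is compatible with this order, i.e.\ that it is monotone in each variable and that the unit $\overline{0}$ behaves correctly. Reflexivity and transitivity of $\leq_{\mathbb{W}(\mathbb{L})}$ are immediate from the corresponding properties of $\leq_{\mathbb{L}}$ applied coordinatewise over $\lambda \in \mathbb{P}$. For antisymmetry I would use the fact that the monomial symmetric functions $\{m_\lambda\}_{\lambda \in \mathbb{P}}$ together with $m_\emptyset = 1$ form an $\mathbb{N}$-basis of $\Lambda$, so a rig homomorphism $f \colon \Lambda \to \mathbb{L}$ is \emph{not} determined by its values on the $m_\lambda$ alone in the additive sense — but it \emph{is} determined as a function, because every element of $\Lambda$ is an $\mathbb{N}$-linear combination of the $m_\lambda$ (and $1$), and $f$ preserves $+$ (hence $\min$ in $\mathbb{L}$) and the rig operations. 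Thus if $f(m_\lambda) = g(m_\lambda)$ for all $\lambda \in \mathbb{P}$ and also $f(1) = g(1) = 0$ (which holds automatically since $1 = m_\emptyset$ and any rig homomorphism sends $1$ to the multiplicative unit $0$ of $\mathbb{L}$), then $f = g$ on all of $\Lambda$; combined with antisymmetry of $\leq_{\mathbb{L}}$ this gives antisymmetry of $\leq_{\mathbb{W}(\mathbb{L})}$.

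For the monoidal-poset condition, the key point is monotonicity: if $f \leq_{\mathbb{W}(\mathbb{L})} f'$ then $f \otimes_{\mathbb{W}(\mathbb{L})} g \leq_{\mathbb{W}(\mathbb{L})} f' \otimes_{\mathbb{W}(\mathbb{L})} g$ for every $g$, and symmetrically in the second variable (symmetry of the argument is free because $\WL$ is commutative). Here I would use the explicit formula recorded just before the statement:
\[
(f \otimes_{\mathbb{W}(\mathbb{L})} g)(m_\lambda) = \min_{\mu, \mu'} \{ f(m_\mu) + g(m_{\mu'}) \},
\]
the minimum ranging over the finitely many pairs $(\mu,\mu')$ occurring in $\Delta^\times(m_\lambda)$. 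Since $f(m_\mu) \leq_{\mathbb{L}} f'(m_\mu)$ means $f(m_\mu) \geq f'(m_\mu)$ in the usual order, we get $f(m_\mu) + g(m_{\mu'}) \geq f'(m_\mu) + g(m_{\mu'})$ termwise, and taking $\min$ over the same index set preserves the inequality; translating back, $(f \otimes_{\mathbb{W}(\mathbb{L})} g)(m_\lambda) \leq_{\mathbb{L}} (f' \otimes_{\mathbb{W}(\mathbb{L})} g)(m_\lambda)$ for all $\lambda$, which is exactly $f \otimes_{\mathbb{W}(\mathbb{L})} g \leq_{\mathbb{W}(\mathbb{L})} f' \otimes_{\mathbb{W}(\mathbb{L})} g$. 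The associativity, unit, and symmetry coherences of the monoidal structure are not something to reprove: they hold because $\WL$ is a commutative rig by the first theorem of the excerpt, and $\otimes_{\mathbb{W}(\mathbb{L})}$ with unit $\overline{0}$ is precisely its multiplication; a monoidal poset is just a poset that is a (commutative) monoid with monotone multiplication, so once monotonicity is in hand we are done.

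The main obstacle I anticipate is the bookkeeping needed to make the formula for $(f \otimes_{\mathbb{W}(\mathbb{L})} g)(m_\lambda)$ legitimate and to confirm that the index set $\{(\mu,\mu')\}$ appearing there is independent of $f,g$ — i.e.\ that it is genuinely determined by the combinatorics of $\Delta^\times$ on $m_\lambda$ alone. One must check that the coefficients $c^\lambda_{\mu,\mu'}$ are nonnegative integers (true, since $\Lambda$ is a birig over $\mathbb{N}$ and $\Delta^\times$ is a rig homomorphism into $\Lambda \otimes_{\mathbb{N}} \Lambda$, so it lands in the $\mathbb{N}$-span of the $m_\mu \otimes m_{\mu'}$) and that, because addition in $\mathbb{L}$ is idempotent ($\min(a,a) = a$) and $\mathbb{L}$ has characteristic one, multiplicities $c^\lambda_{\mu,\mu'} > 1$ collapse: a term $c \cdot (m_\mu \otimes m_{\mu'})$ with $c \geq 1$ contributes $f(m_\mu) + g(m_{\mu'})$ to the $\min$ regardless of $c$. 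This is what lets us write $\min_{\mu,\mu'}$ over the \emph{support} of $\Delta^\times(m_\lambda)$ rather than a weighted expression, and it is the one place where the characteristic-one nature of $\mathbb{L}$ is essential rather than cosmetic. Everything else reduces to unwinding the definition of $\oplus_{\mathbb{L}} = \min$ and the fact that $\min$ is monotone.
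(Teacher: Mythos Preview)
Your proposal is correct and follows essentially the same route as the paper: the rig structure of $\WL$ supplies associativity and unit, commutativity reduces bifunctoriality to one variable, and monotonicity is checked via the explicit formula $(f\otimes_{\WL}g)(m_\lambda)=\min_{\mu,\mu'}\{f(m_\mu)+g(m_{\mu'})\}$ together with termwise comparison. You add more detail than the paper does---the antisymmetry argument via the $\mathbb{N}$-basis $\{m_\lambda\}$ and the remark that idempotency of $\min$ collapses the multiplicities $c^\lambda_{\mu,\mu'}$---but the structure of the argument is the same.
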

\begin{proof}It is clear that $\mathbb{W}(\mathbb{L})$ is a poset with $\leq_{\WL}$.
Since $\otimes_{\WL}$ and $\overline{0}$ are parts of the rig structure of $\WL$, the unit axiom and the associativity axiom hold.
We only need to check that the bifunctoriality of $\mathchar`-\otimes_{\WL}\mathchar`-:\WL\times\WL\to\WL$.
Since the multiplication is commutative, it is enough to show the functoriality on the first component.

Let $f, f'\in\WL$ with $f\leq_{\WL}f'$.
Then by definition for any $\nu\in\mathbb{P}$ we have $f(m_{\nu})\geq f'(m_{\nu})$.
For any $g\in\WL$ and $\lambda\in\mathbb{P}$, we have
\begin{eqnarray*}
(f\otimes_{\mathbb{W}(\mathbb{L})}g)(m_{\lambda})&=&\min_{\mu, \mu'}\{f(m_{\mu})+g(m_{\mu'})\} \\
&\geq&\min_{\mu, \mu'}\{f'(m_{\mu})+g(m_{\mu'})\}\\
&=&(f'\otimes_{\mathbb{W}(\mathbb{L})}g)(m_{\lambda}),
\end{eqnarray*}
where $\mu$ and $\mu'$ are the partitions that appear in the unique expression $\Delta^{\times}(m_{\lambda})=\sum_{\mu, \mu'}c_{\mu, \mu'}^{\lambda}m_{\mu}\otimes m_{\mu'}$ with $c_{\mu, \mu'}^{\lambda}\in\mathbb{N}$.

Thus we have $f\otimes_{\WL}g\leq_{\WL} f'\otimes_{\WL}g$.
\end{proof}

\begin{rem}
Note that $\{m_{\lambda}\}_{\lambda\in\mathbb{P}}$ forms an $\mathbb{N}$-module basis for $\Lambda$, and that elements of $\WL$ are rig homomorphisms.
There may exist a more natural way to endow $\WL$ with a partial order which makes it a complete partially ordered set.
\end{rem}

Let $\WL\mathchar`-\mathbf{Cat}$ denote the category of (small) categories enriched over the monoidal poset $\WL$.
Let $X\in\WL\mathchar`-\mathbf{Cat}$.
Then we have a hom-map $d:X\times X\to \WL$, where, by abusing notation, we denote $X$ by objects in $X$ and $\WL$ by objects in $\WL$, which satisfies
\[\overline{0}\leq_{\WL}d(x, x)\]
\[d(x, x')\otimes_{\WL}d(x', x'')\leq_{\WL}d(x, x''),\]
for any $x, x', x''\in X$.

For any $\lambda\in\mathbb{P}$, we define for simplicity the map $d_{\lambda}:X\times X\to\mathbb{L}$ given by $d_{\lambda}(x, x')=d(x, x')(m_{\lambda})$ for any $x, x'\in X$.

Then, from the first inequality above, we obtain by the definition of $\overline{0}$
\[0=d_{(n)}(x, x)\]
for any $n\in\mathbb{N}_{>0}$, where $(n)$ denotes the partition $(n, 0, 0, \dots)$ of $n$.
Note that for any partition $\lambda\in\mathbb{P}$ not of the form $(n)$, we have only $\infty\geq d_{\lambda}(x, x)$, which is trivial.
From the second one above, for any $\lambda\in\mathbb{P}$ we have
\[d_{\lambda}(x, x'')\leq \min_{\mu, \mu'}\{d_{\mu}(x, x')+d_{\mu'}(x', x'')\},\]
where $\mu$ and $\mu'$ run over the sum $\Delta^{\times}(m_{\lambda})=\sum_{\mu, \mu'}c_{\mu, \mu'}^{\lambda}m_{\mu}\otimes m_{\mu'}$.

\begin{prop}For any $\mathbb{W}(\mathbb{L})$-categeory $(X, d)$ and any $n\in\mathbb{N}_{>0}$, $(X, d_{(n)})$ is an $\mathbb{L}$-category.
\end{prop}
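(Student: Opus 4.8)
The plan is to verify the two axioms of an $\mathbb{L}$-category for $(X, d_{(n)})$ directly, by specialising the corresponding axioms for $(X,d)$ to the one-part partition $\lambda = (n)$. The identity axiom requires no new work: as already recorded in the excerpt, the inequality $\overline{0}\leq_{\WL} d(x,x)$ together with the definition of $\overline{0}$ forces $d_{(n)}(x,x)=0$ for every $n\in\mathbb{N}_{>0}$, and $d_{(n)}(x,x)=0$ is exactly the statement $0\leq_{\mathbb{L}} d_{(n)}(x,x)$.

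The content of the proof is therefore the composition axiom, and the key step is the computation of $\Delta^{\times}(m_{(n)})$. First I would note that $m_{(n)}=\sum_i x_i^{\,n}=p_n$ is the $n$-th power sum. Applying the defining formula for co-multiplication, $\Delta^{\times}(f(x_1,x_2,\dots))=f(\dots,x_i\otimes x_j,\dots)$, gives
\[
\Delta^{\times}(m_{(n)})=\sum_{i,j}(x_i\otimes x_j)^n=\sum_{i,j}x_i^{\,n}\otimes x_j^{\,n}=\Bigl(\sum_i x_i^{\,n}\Bigr)\otimes\Bigl(\sum_j x_j^{\,n}\Bigr)=m_{(n)}\otimes m_{(n)}.
\]
In other words $m_{(n)}$ is group-like for $\Delta^{\times}$, so in the unique expansion $\Delta^{\times}(m_{(n)})=\sum_{\mu,\mu'}c_{\mu,\mu'}^{(n)}m_{\mu}\otimes m_{\mu'}$ the only nonzero term is $\mu=\mu'=(n)$, with coefficient $1$.

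Feeding $\lambda=(n)$ into the inequality displayed just before the proposition, namely $d_{\lambda}(x,x'')\leq\min_{\mu,\mu'}\{d_{\mu}(x,x')+d_{\mu'}(x',x'')\}$ with $\mu,\mu'$ ranging over the terms of $\Delta^{\times}(m_{\lambda})$, the minimum collapses to a single term and we obtain
\[
d_{(n)}(x,x'')\leq d_{(n)}(x,x')+d_{(n)}(x',x''),
\]
which, rewritten in $\leq_{\mathbb{L}}$ and $\otimes_{\mathbb{L}}$, is precisely $d_{(n)}(x,x')\otimes_{\mathbb{L}}d_{(n)}(x',x'')\leq_{\mathbb{L}}d_{(n)}(x,x'')$. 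Together with the identity axiom this shows $(X,d_{(n)})$ is an $\mathbb{L}$-category.

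The only real obstacle is the computation $\Delta^{\times}(m_{(n)})=m_{(n)}\otimes m_{(n)}$; everything else is bookkeeping already set up in the excerpt. A minor point worth a sentence is that the substitution formula for $\Delta^{\times}$ indeed lands in $\Lambda\otimes_{\mathbb{N}}\Lambda$ with natural-number coefficients and agrees with the monomial-basis description used to define $\otimes_{\WL}$; but this is part of the birig structure of $\Lambda$ quoted from \cite{B3}, so nothing new is needed there.
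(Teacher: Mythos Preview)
Your proposal is correct and follows essentially the same approach as the paper: both verify the identity axiom via $\overline{0}(m_{(n)})=0$ and reduce the composition axiom to the fact that $\Delta^{\times}(m_{(n)})=m_{(n)}\otimes m_{(n)}$, so that the minimum over $(\mu,\mu')$ collapses to the single pair $((n),(n))$. The only difference is that you supply the explicit power-sum computation of $\Delta^{\times}(m_{(n)})$, whereas the paper simply asserts the identity; this is added detail rather than a different argument.
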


\begin{proof}By the definition of $\mathbb{W}(\mathbb{L})$, for any $x\in X$we have
\[\overline{0}\leq_{\mathbb{W}(\mathbb{L})}d(x, x).\]
Since $\overline{0}(m_{(n)})=0$, we have $0=d_{(n)}(x, x)$ as we saw.
Also for any $x, y, z\in X$, by the definition of $\WL$, we have
\[d(x, z)\geq_{\WL} d(x, y)\otimes_{\WL}d(y, z)\]
Since $\Delta^{\times}(m_{(n)})=m_{(n)}\otimes m_{(n)}$, by the definition of $\leq_{\WL}$, we have 
\[d_{(n)}(x, y)\leq d_{(n)}(x, y)+d_{(n)}(y, z).\]
\end{proof}
So a $\WL$-category $(X, d)$ contains a sequence $\{(X, d_{(n)})\}$ of $\mathbb{L}$-categories, with a common underlying set, indexed by $\mathbb{N}_{>0}$, which may be viewed as discrete time.
One may think of it as the shape of $X$ changes moment by moment as time passes, while the particles remain the same.
Note that there may be some particles $x, x'\in X$ with $x\neq x'$ satisfy $d_{(n)}(x, x')=0$ at some phase $n$ and $d_{(n')}(x, x')\neq 0$ at another phase $n'$.

\begin{prop}For any $\mathbb{W}(\mathbb{L})$-categeory $(X, d)$ and any $\lambda\in\mathbb{P}$, $d_{\lambda}$ satisfies the triangle inequality.
\end{prop}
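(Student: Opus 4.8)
The plan is to reduce the claim to a single fact about the comultiplication $\Delta^{\times}$: that the summand $m_{\lambda}\otimes m_{\lambda}$ occurs in $\Delta^{\times}(m_{\lambda})$ with nonzero coefficient, i.e.\ $c^{\lambda}_{\lambda,\lambda}\neq 0$. Granting this, the composition axiom of $(X,d)$ in the form already derived above, $d_{\lambda}(x, x'')\leq \min_{\mu, \mu'}\{d_{\mu}(x, x')+d_{\mu'}(x', x'')\}$ with $(\mu,\mu')$ ranging over the pairs occurring in $\Delta^{\times}(m_{\lambda})=\sum_{\mu, \mu'}c_{\mu, \mu'}^{\lambda}m_{\mu}\otimes m_{\mu'}$, specializes at once to
\[d_{\lambda}(x,x'') \leq d_{\lambda}(x,x') + d_{\lambda}(x',x'')\]
by taking $\mu=\mu'=\lambda$, which is a legitimate choice precisely because $c^{\lambda}_{\lambda,\lambda}\neq 0$. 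This holds for all $x,x',x''\in X$, which is the triangle inequality for $d_{\lambda}$. Only the non-vanishing of $c^{\lambda}_{\lambda,\lambda}$ is used; its actual value is irrelevant, since in $\mathbb{L}$ a repeated join of a single element equals that element.

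To prove $c^{\lambda}_{\lambda,\lambda}\neq 0$ I would first translate it into the monomial expansion. Write $x^{\lambda}$ for the monomial whose exponent sequence is the weakly decreasing sequence of parts of $\lambda$. Using $m_{\mu}\otimes m_{\nu}=\sum_{\alpha,\beta}x^{\alpha}\otimes x^{\beta}$, where $\alpha$ and $\beta$ run over the distinct rearrangements of $\mu$ and of $\nu$, one sees that $x^{\lambda}\otimes x^{\lambda}$ appears in $m_{\mu}\otimes m_{\nu}$ only for $\mu=\nu=\lambda$, and then with coefficient one; hence $c^{\lambda}_{\lambda,\lambda}$ equals the coefficient of $x^{\lambda}\otimes x^{\lambda}$ in $\Delta^{\times}(m_{\lambda})$. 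Now apply the defining formula $\Delta^{\times}(m_{\lambda})=m_{\lambda}\big((x_i\otimes x_j)_{i,j\geq 1}\big)$: expanding $m_{\lambda}$ in the variables $z_{ij}:=x_i\otimes x_j$, a monomial $\prod_{i,j}z_{ij}^{a_{ij}}$, for a finitely supported matrix $(a_{ij})$ over $\mathbb{N}$, has coefficient $1$ exactly when the multiset of positive entries $a_{ij}$ equals the multiset of parts of $\lambda$, and $0$ otherwise, while $\prod_{i,j}z_{ij}^{a_{ij}}=\big(\prod_i x_i^{\sum_j a_{ij}}\big)\otimes\big(\prod_j x_j^{\sum_i a_{ij}}\big)$. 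Therefore the coefficient of $x^{\lambda}\otimes x^{\lambda}$ in $\Delta^{\times}(m_{\lambda})$ is the number of matrices $(a_{ij})$ over $\mathbb{N}$ whose row-sum vector and column-sum vector both equal the sequence of parts of $\lambda$ and whose multiset of nonzero entries is the multiset of parts of $\lambda$. The diagonal matrix $a_{ij}=\lambda_i\delta_{ij}$ is such a matrix, so this count is at least $1$; hence $c^{\lambda}_{\lambda,\lambda}\geq 1$, and the proposition follows.

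The only delicate point — the main, and rather mild, obstacle — is the bookkeeping in the second paragraph: one must check that distinct rearrangements of a partition give distinct monomials, that $x^{\lambda}\otimes x^{\lambda}$ is not accidentally produced by some other $m_{\mu}\otimes m_{\nu}$, and that the passage from a matrix $(a_{ij})$ to its pair of marginal sums is correctly accounted for when collecting like $\otimes$-monomials. None of this is deep; everything else is a formal consequence of the composition axiom recalled above together with the birig structure of $\Lambda$ from \cite{B3}.
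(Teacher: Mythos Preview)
Your proof is correct and follows essentially the same route as the paper: both reduce the triangle inequality to the single combinatorial fact $c^{\lambda}_{\lambda,\lambda}\neq 0$, and both witness this by the diagonal monomial $(x_1\otimes x_1)^{\lambda_1}\cdots(x_n\otimes x_n)^{\lambda_n}$ in $\Delta^{\times}(m_{\lambda})$, which is exactly your diagonal matrix $a_{ij}=\lambda_i\delta_{ij}$. Your version is more explicit about why the coefficient of $x^{\lambda}\otimes x^{\lambda}$ computes $c^{\lambda}_{\lambda,\lambda}$, but the idea is identical.
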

\begin{proof}We prove, for any $x, y, z\in X$ and any $\lambda=(\lambda_1, ..., \lambda_n)\in\mathbb{P}$, $d_{\lambda}(x, z)\leq d_{\lambda}(x, y)+d_{\lambda}(y, z)$.

By the definition of $\WL$-category, we have
\[d_{\lambda}(x, z)\leq (d(x, y)\otimes_{\WL}d(y, z))(m_{\lambda}).\]
Again by the definition of $\otimes_{\WL}$, we have 
\[(d(x, y)\otimes_{\WL}d(y, z))(m_{\lambda})=\min_{\mu, \mu'}\{d_{\mu}(x, y)+d_{\mu'}(y, z)\},\]
where $\mu$ and $\mu'$ are the partitions which appear in the sum 
\[\Delta^{\times}(m_{\lambda})=\sum_{\mu, \mu'}c_{\mu, \mu'}^{\lambda}m_{\mu}\otimes m_{\mu'}.\]
By definition, the term $(x_1\otimes x_1)^{\lambda_1}\cdots(x_n\otimes x_n)^{\lambda_n}=(x_1^{\lambda_1}\cdots x_n^{\lambda_n})\otimes(x_1^{\lambda_1}\cdots x_n^{\lambda_n})$ appears in $\Delta^{\times}(m_{\lambda})=m_{\lambda}(x_1\otimes x_1, x_1\otimes x_2, ..., x_i\otimes x_j, ...)$ at least once.
Therefore, we have
\[(d(x, y)\otimes_{\WL}d(y, z))(m_{\lambda})\leq d_{\lambda}(x, y)+d_{\lambda}(y, z).\]
\end{proof}

For a $\WL$-category $(X, d)$ and any $\lambda\in\mathbb{P}$ not of the form $(n)$ for any $n\in\mathbb{N}_{>0}$, $(X, d_{\lambda})$ is still a kind of metric space.
However in them there may exist some particle $x\in X$ for which the self-distance $d_{\lambda}(x, x)$ is not $0$.
Note that, in a partial metric space, a point can have non-zero distance between itself \cite{Matthew}.
Thus a $\WL$-category $(X, d)$ consists of a family $\{(X, d_{\lambda})\}_{\lambda\in\mathbb{P}}$ of space-like structures indexed by $\mathbb{P}$ with a common underlying set, whose components are related by the following inequalities mentioned above, namely for any  $x, x', x''\in X$ and $\lambda\in\mathbb{P}$,
\[d_{\lambda}(x, x'')\leq \min_{\mu, \mu'}\{d_{\mu}(x, x')+d_{\mu'}(x', x'')\},\]
where $\mu$ and $\mu'$ are the partitions appearing in $\Delta^{\times}(m_{\lambda})$.
Note that, by the definition of $\Delta^{\times}$, we have $\mu, \mu'\in\mathbb{P}_{|\lambda|}$ for such $\mu$ and $\mu'$, where $|\lambda|$ denotes $\lambda_1+\lambda_2+\cdots +\lambda_{n}$ with $\lambda=(\lambda_1, \lambda_2, \dots \lambda_{n})$.

Thus it may be natural to think about the complete symmetric functions $h_n=\sum_{\lambda\in\mathbb{P}_{n}}m_{\lambda}$ of degree $n$, for all $n$.

\begin{definition}For $(X, d)\in\WL$ and $n\in\mathbb{N}_{>0}$, we define $d_{n}$ as follows:
\[d_{n}:X\times X\to \mathbb{L}, (x, y)\mapsto d(x, y)(h_n).\]
\end{definition}

\begin{prop}\label{complete-metric}For any $\mathbb{W}(\mathbb{L})$-categeory $(X, d)$ and any $n\in\mathbb{N}$, $(X, d_{n})$ is an $\mathbb{L}$-category.
\end{prop}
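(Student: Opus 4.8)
The plan is to check the two axioms of an $\mathbb{L}$-enriched category for $(X,d_{n})$: the identity axiom $0\leq_{\mathbb{L}}d_{n}(x,x)$, which — as $d_{n}$ takes values in $[0,\infty]$ and the unit of $\mathbb{L}$ is $0$ — means $d_{n}(x,x)=0$; and the composition axiom $d_{n}(x,y)\otimes_{\mathbb{L}}d_{n}(y,z)\leq_{\mathbb{L}}d_{n}(x,z)$, which — as $\otimes_{\mathbb{L}}$ is ordinary addition — means $d_{n}(x,z)\leq d_{n}(x,y)+d_{n}(y,z)$ in the usual order. The argument will follow the pattern of the previous propositions; the new ingredient is the identity $h_{n}=\sum_{\lambda\in\mathbb{P}_{n}}m_{\lambda}$ together with one combinatorial feature of $\Delta^{\times}(h_{n})$. (The case $n=0$ is trivial: $h_{0}=1$, so $d_{0}\equiv 0$.)

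The fact used repeatedly is that every $f\in\WL$, and likewise every product $g\otimes_{\WL}h\in\WL$, is a rig homomorphism $\Lambda\to\mathbb{L}$; since addition in $\Lambda$ is sent to $\oplus_{\mathbb{L}}=\min$, which is idempotent, we get $f(h_{n})=\min_{\lambda\in\mathbb{P}_{n}}f(m_{\lambda})$. For the identity axiom this already suffices: $(n)\in\mathbb{P}_{n}$, so $d_{n}(x,x)=\min_{\lambda\in\mathbb{P}_{n}}d_{\lambda}(x,x)\leq d_{(n)}(x,x)$, and $d_{(n)}(x,x)=0$ has been established (from $\overline{0}\leq_{\WL}d(x,x)$ and $\overline{0}(m_{(n)})=0$); hence $d_{n}(x,x)=0$.

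For the composition axiom I would first evaluate the composition axiom $d(x,y)\otimes_{\WL}d(y,z)\leq_{\WL}d(x,z)$ of $(X,d)$ at $h_{n}$: since $\leq_{\WL}$ is coordinatewise in the $m_{\lambda}$ and, by the above, $f(h_{n})=\min_{\lambda}f(m_{\lambda})$, this yields $d_{n}(x,z)\leq\bigl(d(x,y)\otimes_{\WL}d(y,z)\bigr)(h_{n})$ in the usual order. By the definition of $\otimes_{\WL}$ applied to each $m_{\lambda}$ together with the linearity of $\Delta^{\times}$, $\bigl(d(x,y)\otimes_{\WL}d(y,z)\bigr)(h_{n})=\min_{(\mu,\mu')}\{d_{\mu}(x,y)+d_{\mu'}(y,z)\}$, the minimum being over all pairs $(\mu,\mu')$ occurring in the $m$-basis expansion of $\Delta^{\times}(h_{n})$ (necessarily $\mu,\mu'\in\mathbb{P}_{n}$). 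The crux is the claim that \emph{every} $(\mu,\mu')\in\mathbb{P}_{n}\times\mathbb{P}_{n}$ occurs there with positive coefficient. To see this, recall that $\Delta^{\times}(h_{n})$ is $h_{n}$ evaluated at the alphabet $\{x_{i}\otimes x_{j}\}_{i,j\geq 1}$, and $h_{n}$ is the sum, each with coefficient $1$, of all monomials of degree $n$; hence $\Delta^{\times}(h_{n})$ is the sum, over all multisets $\{(i_{1},j_{1}),\dots,(i_{n},j_{n})\}$ of $n$ pairs, of $(x_{i_{1}}\cdots x_{i_{n}})\otimes(x_{j_{1}}\cdots x_{j_{n}})$. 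Given $\mu=(\mu_{1},\dots,\mu_{r})$ and $\mu'=(\mu'_{1},\dots,\mu'_{s})$ in $\mathbb{P}_{n}$, form the multiset of $n$ pairs obtained by pairing, entry by entry, the length-$n$ word consisting of $\mu_{1}$ copies of $1$, then $\mu_{2}$ copies of $2$, \dots, then $\mu_{r}$ copies of $r$, with the length-$n$ word consisting of $\mu'_{1}$ copies of $1$, then $\mu'_{2}$ copies of $2$, \dots, then $\mu'_{s}$ copies of $s$; this multiset contributes the monomial $(x_{1}^{\mu_{1}}\cdots x_{r}^{\mu_{r}})\otimes(x_{1}^{\mu'_{1}}\cdots x_{s}^{\mu'_{s}})$ to $\Delta^{\times}(h_{n})$, and since $\Delta^{\times}(h_{n})$ has non-negative integer coefficients this monomial is not cancelled, so $m_{\mu}\otimes m_{\mu'}$ occurs in $\Delta^{\times}(h_{n})$ with positive coefficient. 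Granting this, choose $\mu_{0},\mu'_{0}\in\mathbb{P}_{n}$ with $d_{\mu_{0}}(x,y)=d_{n}(x,y)$ and $d_{\mu'_{0}}(y,z)=d_{n}(y,z)$ (possible since $\mathbb{P}_{n}$ is finite and $d_{n}=\min_{\lambda\in\mathbb{P}_{n}}d_{\lambda}$); since $(\mu_{0},\mu'_{0})$ occurs in $\Delta^{\times}(h_{n})$, the minimum above is $\leq d_{n}(x,y)+d_{n}(y,z)$, and combining with the first inequality gives $d_{n}(x,z)\leq d_{n}(x,y)+d_{n}(y,z)$.

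The only step that is not routine is the combinatorial claim that all pairs of partitions of $n$ occur in $\Delta^{\times}(h_{n})$; everything else amounts to tracking the order-reversal $\leq_{\mathbb{L}}$ against the usual order and using that the multiplicities appearing in $\Delta^{\times}$ drop out because $\min$ is idempotent.
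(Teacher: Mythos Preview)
Your proof is correct and follows essentially the same route as the paper: both verify the identity axiom via $d_{n}(x,x)\leq d_{(n)}(x,x)=0$, and both establish the triangle inequality by evaluating the $\WL$-composition axiom at $h_{n}$ and reducing to the fact that every pair $(\mu,\mu')\in\mathbb{P}_{n}\times\mathbb{P}_{n}$ occurs in $\Delta^{\times}(h_{n})$. The only difference is that the paper asserts this last combinatorial fact in passing (writing the equality $\min_{|\lambda|=n}\min_{\mu,\mu'}\{\cdots\}=\min_{|\mu|=|\mu'|=n}\{\cdots\}$ and justifying it only by ``$\lambda$ runs over all partitions of size $n$''), whereas you supply an explicit witness monomial for each pair; your argument is therefore a strictly more detailed version of the same proof.
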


\begin{proof}For any $x\in X$, by the definition of $d_{n}$, we have 
\[d_{n}(x, x)=\min_{|\lambda|=n}\{d_{\lambda}(x, x)\}\leq d_{(n)}(x, x)=0.\]

Let $x, y, z\in X$.
By definition, we have
\begin{eqnarray*}
d_{n}(x, z)&\leq&(d(x, y)\otimes_{\WL}d(y, z))(h_n) \\
&=&\min_{|\lambda|=n}\{(d(x, y)\otimes_{\WL}d(y, z))(m_{\lambda})\}\\
&=&\min_{|\lambda|=n}\{\min_{\mu, \mu'}\{d_{\mu}(x, y)+d_{\mu'}(y, z)\}.
\end{eqnarray*}
Here, $\mu$ and $\mu'$ are the partitions which appear in $\Delta^{\times}(m_{\lambda})$.
So in particular, $|\mu|=|\mu'|=n$.
Moreover, in $\min_{|\lambda|=n}\{\min_{\mu, \mu'}\{d_{\mu}(x, y)+d_{\mu'}(y, z)\}$, $\lambda$ runs over all partitions of size $n$.
Therefore, we have
\begin{eqnarray*}
(d(x, y)\otimes_{\WL}d(y, z))(h_n)&=&\min_{|\mu|=|\mu'|=n}\{d_{\mu}(x, y)+d_{\mu'}(y, z)\}\\
&=&\min_{|\mu|=n}\{d_{\mu}(x, y)\}+\min_{|\mu'|=n}\{d_{\mu'}(y, z)\}\\
&=&d_{n}(x, y)+d_{n}(y, z).
\end{eqnarray*}
\end{proof}

\begin{rem}Consider, for example, the Plancherel measure on $\mathbb{P}_{n}$ for every $n\in\mathbb{N}_{>0}$.
These give rise to a Poissonized Plancherel measure on $\mathbb{P}$ which is a standard tool for the theory of asymptotic analysis of the Young lattice.
See \cite{BO} for more detail.

Let $X\in\WLCat$.
Then we obtain the family $\{(X, d_{\lambda})\}_{\lambda\in\mathbb{P}}$.
So it might be fine to say X collects all possible ways in which a {\it space} may grow probabilistically.
Note that the probability that a randomly chosen $\lambda$ is of the form $(|\lambda|)$ tends to decrease as $|\lambda|$ becomes larger.
One may also consider other stochastic processes on $\mathbb{P}$.

We may not need to see a $\WL$-category as a collection of processes in which a space can evolve probabilistically.
For $x, x'\in X$, the \textit{distance} $d(x, x')$ is not yet determined in some sense.
Namely, we can evaluate it at $f\in\Lambda$ to obtain an observed value $d_{f}(x, x'):=d(x, x')(f)\in[0, \infty]$.
If $f$ is expressed as $\sum a_{\lambda}m_{\lambda}$ with $a_{\lambda}\in\mathbb{N}$, the observed value is given by $d_{f}(x, x')=\min\{d_{\lambda}(x, x')\}$.
The partition achieving the minimum depends on $x$, $x'$ and $f$.

As Proposition \ref{complete-metric} shows, $\{(X, d_{n})\}_{n\in\mathbb{N}_{>0}}$ is a sequence of $\mathbb{L}$-categories, which does not depend on the measure.
It might be also fine to say a $\WL$-category X includes a discrete-time sequence of $\mathbb{L}$-categories.
\end{rem}

By replacing the base category $\mathbb{L}$ with $\WL$, we can endow spaces with a structure related to the composition operator on $\Lambda$ in the following sense.
Since $\mathbb{W}$ is a comonad, there is a map $c_{\mathbb{L}}:\WL\to\mathbb{W}(\WL)$, which is induced by the composition operator on $\Lambda$.

Let $(X, d)\in\WLCat$.
Then we have a map 
\[c_{\mathbb{L}}\circ d:X\times X\to\mathbb{W}(\WL),\]
defined for all $x, x'\in X$ by,
\[(c_{\mathbb{L}}\circ d)(x, x'):\Lambda\to\WL,\]
where for each $f\in\Lambda$,
\[((c_{\mathbb{L}}\circ d)(x, x'))(f):\Lambda\to\mathbb{L}, g\mapsto d_{g\circ f}(x, x').\]

Let $|X|$ denote the family $\{(X, d_{f})\}_{f\in\Lambda}$, which carries a natural rig structure with addition $(X, d_{f})\oplus (X, d_{f'}):=(X, d_{f+f'})$ and multiplication $(X, d_{f})\otimes (X, d_{f'}):=(X, d_{f\cdot f'})$.
We let $\operatorname{End}(|X|)$ denote the set of maps from $|X|$ to itself, and equip it with the induced rig structure.
By the definition of the composition operator on $\Lambda$, we have the following.

\begin{prop}Let $(X, d)\in\WLCat$.
Then the action
\[\Lambda\to\operatorname{End}(|X|), \ \ g\mapsto ((X, d_{f})\mapsto (X, d_{g\circ f}))\]
is a rig homomorphism.
\end{prop}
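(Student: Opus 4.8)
The plan is to reduce the statement to a single structural property of the composition operator on $\Lambda$ and then unwind the definitions. The key fact I would record is that for each fixed $f\in\Lambda$ the precomposition map
\[
\Lambda\longrightarrow\Lambda,\qquad g\longmapsto g\circ f,
\]
is a rig homomorphism: $0\circ f=0$, $1\circ f=1$, $(g_{1}+g_{2})\circ f=g_{1}\circ f+g_{2}\circ f$ and $(g_{1}g_{2})\circ f=(g_{1}\circ f)(g_{2}\circ f)$. This is part of the composition $\mathbb{N}$-algebra structure of $\Lambda$ (additivity and multiplicativity of plethysm in its \emph{outer} argument), and it is exactly what is used implicitly above: in the comonad comultiplication $\mathbb{W}(S)\to\mathbb{W}(\mathbb{W}(S))$, $w\mapsto\tilde w$, the element $\tilde w(\varphi)$ sends $\psi$ to $w(\psi\circ\varphi)$, so $\psi\mapsto\psi\circ\varphi$ must itself be a rig homomorphism for $\tilde w(\varphi)$ to be one. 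Note that additivity of $\circ$ in the \emph{inner} argument, which fails, is never needed.

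Next I would make the two rigs explicit. Since every $d(x,x')\colon\Lambda\to\mathbb{L}$ is a rig homomorphism, the assignment $f\mapsto d_{f}$ is a rig homomorphism from $\Lambda$ to the function rig $\operatorname{Map}(X\times X,\mathbb{L})$ with its pointwise rig structure; identifying $(X,d_{f})$ with the function $d_{f}$, the set $|X|$ is precisely the image of this homomorphism, which is exactly the rig structure on $|X|$ in the statement, with zero $(X,d_{0})$ and unit $(X,d_{1})$. The rig $\operatorname{End}(|X|)$ then carries the pointwise structure induced from $|X|$: the sum and product of $\Phi,\Psi\colon|X|\to|X|$ are computed valuewise in $|X|$, its zero is the constant map with value $(X,d_{0})$, and its unit is the constant map with value $(X,d_{1})$.

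For $g\in\Lambda$ put $\Phi_{g}\colon|X|\to|X|$, $(X,d_{f})\mapsto(X,d_{g\circ f})$; since $g\circ f\in\Lambda$ this does land in $|X|$, so $g\mapsto\Phi_{g}$ is a map $\Lambda\to\operatorname{End}(|X|)$. To finish, I would evaluate each homomorphism identity on an arbitrary element $(X,d_{f})$ of $|X|$ and combine the key fact with the definitions of the operations. For the additive part,
\begin{align*}
\Phi_{g_{1}+g_{2}}(X,d_{f})&=(X,d_{(g_{1}+g_{2})\circ f})=(X,d_{g_{1}\circ f+g_{2}\circ f})\\
&=(X,d_{g_{1}\circ f})\oplus(X,d_{g_{2}\circ f})=\bigl(\Phi_{g_{1}}\oplus\Phi_{g_{2}}\bigr)(X,d_{f}),
\end{align*}
and the multiplicative part is the same computation with $+,\oplus$ replaced by $\cdot,\otimes$; finally $0\circ f=0$ and $1\circ f=1$ identify $\Phi_{0}$ and $\Phi_{1}$ with the constant maps at $(X,d_{0})$ and $(X,d_{1})$, i.e.\ with the zero and unit of $\operatorname{End}(|X|)$. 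Hence $g\mapsto\Phi_{g}$ is a rig homomorphism.

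I do not expect a genuine obstacle: once the behaviour of $\circ$ in its outer variable is in hand, the proposition is a formal calculation. The only points requiring care are to invoke additivity and multiplicativity of the composition operator in its outer argument (which hold) rather than in its inner argument (which does not), and to note that the rig structure on $|X|$ is set up precisely so that these identities in $\Lambda$ transport to the identities needed in $\operatorname{End}(|X|)$.
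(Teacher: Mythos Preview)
Your proposal is correct and follows exactly the route the paper indicates: the paper's entire proof is the sentence ``By the definition of the composition operator on $\Lambda$, we have the following,'' and you have simply unpacked this by isolating the key fact that $g\mapsto g\circ f$ is a rig homomorphism for each fixed $f$ (outer-variable linearity and multiplicativity of plethysm) and then transporting it through the definitions of $|X|$ and $\operatorname{End}(|X|)$. One small point worth tightening: since you choose to identify $|X|$ with the \emph{image} of $f\mapsto d_f$ rather than with a $\Lambda$-indexed family, you should also remark why $\Phi_g$ is well-defined on that image (i.e.\ why $d_f=d_{f'}$ forces $d_{g\circ f}=d_{g\circ f'}$); the paper sidesteps this by treating $|X|$ as a family indexed by $\Lambda$, in which case no such check is needed.
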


\begin{rem}Let $(X, d)\in\WLCat$ and $x, x'\in X$.
Let also $\lambda, \lambda'\in\mathbb{P}$, and write $m_{\lambda}\circ m_{\lambda'}=\sum_{\mu}a_{\mu}m_{\mu}$.
Then we have $d_{m_{\lambda}\circ m_{\lambda'}}(x, x')=\min_{\mu}\{d_{\mu}(x, x')\}$.

Put the lexicographic order $\leq_{l}$ on $\mathbb{P}_n$ for all $n\in\mathbb{N}_{>0}$, which may not seem very natural, and put the induced order on $\mathbb{P}$.
Namely, for $\lambda, \lambda'\in\mathbb{P}$, $\lambda\leq\lambda'$ if and only if $|\lambda|<|\lambda'|$ or $|\lambda|=|\lambda'|$ and $\lambda\leq_{l}\lambda'$.
Thus we obtain a unique minimum partition achieving the minimum $\min_{\mu}\{d_{\mu}(x, x')\}$.
By using this, the action above induces a map $\mathbb{P}\times\{d_{\lambda}(x, x')\}_{\lambda\in\mathbb{P}}\to\{d_{\lambda}(x, x')\}_{\lambda\in\mathbb{P}}$.

It is not easy to compute $g\circ f$ for general $g, f\in\Lambda$.
However, for any $n, n'\in\mathbb{N}_{>0}$, $m_{(n)}\circ m_{(n')}$ is simply $m_{(nn')}$, which plays a role in the theory of Witt vectors of rings.
This in particular defines an action of $\mathbb{N}_{>0}$ on $\{(X, d_{(n)})\}_{n\in\mathbb{N}_{>0}}$.
\end{rem}

\subsection{Relationships of $\WL$-categories and $\mathbb{L}$-categories}
In the following, we investigate how usual $\mathbb{L}$-categories can be interpreted as $\WL$-categories.

\begin{thm}The map $\theta:\mathbb{L}\to\mathbb{W}(\mathbb{L})$ defined by
\[
\theta(r)(m_{\lambda})=
\begin{cases}
nr & \text{if } \lambda = (n) \text{ for some } n \\
\infty & \text{otherwise}
\end{cases}
\]
is a monoidal functor.
\end{thm}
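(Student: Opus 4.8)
The plan is to verify three things in turn: that $\theta(r)$ is a well-defined element of $\mathbb{W}(\mathbb{L})$, i.e.\ a rig homomorphism $\Lambda\to\mathbb{L}$; that $\theta$ is order-preserving, hence a functor $\mathbb{L}\to\mathbb{W}(\mathbb{L})$; and that $\theta$ carries the monoidal unit and product of $\mathbb{L}$ to those of $\mathbb{W}(\mathbb{L})$, so that it is a monoidal functor (in fact strong, with identity comparison maps).

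For the first point I would exhibit $\theta(r)$ as a composite of rig homomorphisms $\Lambda\xrightarrow{\ \rho\ }\mathbb{N}[X]\xrightarrow{\ \mathrm{ev}_r\ }\mathbb{L}$. Here $\rho$ sends $f(x_1,x_2,\dots)$ to $f(X,0,0,\dots)$; it is a rig homomorphism since it is a substitution, and $\rho(m_{(n)})=X^n$ while $\rho(m_\lambda)=0$ for every partition $\lambda$ with at least two nonzero parts, because each monomial occurring in such an $m_\lambda$ involves at least two distinct variables. The map $\mathrm{ev}_r$ is the unique rig homomorphism with $X\mapsto r$, which exists by the universal property of the free rig $\mathbb{N}[X]$ on one generator; here $\mathrm{ev}_r(X^n)=r\otimes_{\mathbb{L}}\cdots\otimes_{\mathbb{L}}r=nr$, because multiplication in $\mathbb{L}$ is ordinary addition, and $\mathrm{ev}_r(0)=\infty$, because $\infty$ is the additive zero of the rig $\mathbb{L}$. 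Thus $\mathrm{ev}_r\circ\rho$ has exactly the values on $\{m_\lambda\}_{\lambda\in\mathbb{P}}$ prescribed in the statement, so $\theta(r)\in\operatorname{Hom}_{\mathbf{CRig}}(\Lambda,\mathbb{L})=\mathbb{W}(\mathbb{L})$. Order-preservation follows at once: if $r\leq_{\mathbb{L}}s$, that is $s\leq r$ in the usual order, then $ns\leq nr$ for all $n$, so $\theta(s)(m_{(n)})\leq\theta(r)(m_{(n)})$, i.e.\ $\theta(r)(m_{(n)})\leq_{\mathbb{L}}\theta(s)(m_{(n)})$, while on every other $m_\lambda$ both sides are $\infty$; hence $\theta(r)\leq_{\mathbb{W}(\mathbb{L})}\theta(s)$.

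For the monoidal structure, the unit is immediate: $\theta(0)(m_{(n)})=0$ for all $n$ and $\theta(0)(m_\lambda)=\infty$ otherwise, which is precisely the defining description of $\overline{0}$, the monoidal unit of $\mathbb{W}(\mathbb{L})$. For the product I would prove $\theta(a)\otimes_{\mathbb{W}(\mathbb{L})}\theta(b)=\theta(a\otimes_{\mathbb{L}}b)=\theta(a+b)$. By the definition of the rig multiplication on $\mathbb{W}(\mathbb{L})$, the left-hand side is the composite $\Lambda\xrightarrow{\Delta^{\times}}\Lambda\otimes_{\mathbb{N}}\Lambda\xrightarrow{\theta(a)\otimes\theta(b)}\mathbb{L}\otimes_{\mathbb{N}}\mathbb{L}\xrightarrow{\ \cdot_{\mathbb{L}}\ }\mathbb{L}$. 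Using $\Delta^{\times}(f(x_1,x_2,\dots))=f(x_i\otimes x_j:i,j\geq1)$ together with the first point, this composite evaluates $f$ after sending $x_i\otimes x_j$ to $a_i\otimes_{\mathbb{L}}b_j$, where $a_1=a$, $b_1=b$ and $a_i=b_j=\infty$ for $i,j\geq2$; since $\infty$ is absorbing for $\otimes_{\mathbb{L}}$, every $x_i\otimes x_j$ with $(i,j)\neq(1,1)$ is sent to $\infty$ and $x_1\otimes x_1\mapsto a+b$, so the composite is the substitution $f\mapsto f(a+b,\infty,\infty,\dots)$, which is $\theta(a+b)$ by the first point. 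One may also see this on the basis: on $m_{(n)}$ it follows from $\Delta^{\times}(m_{(n)})=m_{(n)}\otimes m_{(n)}$, giving $na+nb=n(a+b)$; on $m_\lambda$ with at least two nonzero parts both sides are $\infty$, since $\Delta^{\times}(m_\lambda)$ then has no summand of the form $m_{(k)}\otimes m_{(k')}$ --- a fact extractable from the duality between $\Delta^{\times}$ and the internal (Kronecker) product on $\Lambda$. All comparison maps being equalities, $\theta$ is a monoidal functor.

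The realization of $\theta(r)$ and the order-preservation are routine. I expect the only real obstacle to be the identity $\theta(a)\otimes_{\mathbb{W}(\mathbb{L})}\theta(b)=\theta(a+b)$: one must either run the substitution computation with due care for the coproduct $\otimes_{\mathbb{N}}$ in $\mathbf{CRig}$ and for the passage to infinitely many variables (this is, in essence, the multiplicativity of the Teichm\"uller map $\mathbb{L}\to\mathbb{W}(\mathbb{L})$), or establish the combinatorial statement that $\Delta^{\times}(m_\lambda)$ contains no $m_{(k)}\otimes m_{(k')}$ summand unless $\lambda=(n)$ for some $n$. Either route is short, but this is where the content of the theorem lies.
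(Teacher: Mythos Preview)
Your proposal is correct and essentially complete; the only soft spot is the appeal to ``duality with the Kronecker product'' for the claim that $\Delta^\times(m_\lambda)$ has no $m_{(k)}\otimes m_{(k')}$-summand when $\lambda$ has at least two parts --- this is true, but the direct reason (each monomial of $m_\lambda$ uses $\ell(\lambda)$ distinct variables, so after the substitution $x_i\otimes x_j$ at least two $i$'s or two $j$'s occur) is more transparent and is in any case subsumed by your substitution argument.

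The paper proceeds differently. It does not factor $\theta(r)$ through $\mathbb{N}[X]$; instead it verifies by hand that $\theta(r)$ is multiplicative, case-splitting on products $m_{(n)}m_{(n')}=m_{(n+n')}+m_{(n,n')}$, $m_{(n)}m_\lambda$, and $m_\lambda m_{\lambda'}$, and then checks $\theta(r)\otimes_{\mathbb{W}(\mathbb{L})}\theta(r')=\theta(r+r')$ again on the basis, using $\Delta^\times(m_{(n)})=m_{(n)}\otimes m_{(n)}$ and the same ``no one-row $\otimes$ one-row summand'' fact for the remaining $\lambda$. Your route is more conceptual: recognising $\theta$ as the Teichm\"uller lift $r\mapsto (f\mapsto f(r,\infty,\infty,\dots))$ makes well-definedness automatic and reduces the product compatibility to the single observation that substituting $(a,\infty,\dots)$ in the first tensor factor and $(b,\infty,\dots)$ in the second collapses $\Delta^\times$ to evaluation at $(a+b,\infty,\dots)$. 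The paper's approach has the virtue of being entirely elementary and self-contained on the $m_\lambda$-basis, while yours explains \emph{why} the formulas work and would generalise immediately to any commutative rig in place of $\mathbb{L}$.
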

\begin{proof}We first show that $\theta(r):\Lambda\to\mathbb{L}$ is a rig homomorphism for any $r\in\mathbb{L}$.
Since $\{m_{\lambda}\}_{\lambda\in\mathbb{P}}$ is an $\mathbb{N}$-module basis, $\theta(r)$ preserves addition.
Let $n, n'\in\mathbb{N}_{>0}$.
Then $m_{(n)}m_{(n')}=m_{(n+n')}+m_{(n, n')}$.
So we have
\begin{eqnarray*}
\theta(r)(m_{(n)}m_{(n')})&=&\min\{\theta(r)(m_{(n+n')}), \theta(r)(m_{(n, n')})\}\\
&=&\min\{(n+n')r, \infty\} \\
&=&(n+n')r\\
&=&\theta(r)(m_{(n)})+\theta(r)(m_{(n')})\\
&=&\theta(r)(m_{(n)})\otimes_{\mathbb{L}}\theta(r)(m_{(n')}).
\end{eqnarray*}

Let $\lambda\in\mathbb{P}$ not of the form $(n')$ for any $n'\in\mathbb{N}$, then $m_{(n)}m_{\lambda}=\sum_{\mu}c_{\mu}m_{\mu}$, where any $\mu$ is not of the form $(k)$ with $k\in\mathbb{N}_{>0}$ and $c_{\mu}\in\mathbb{N}$.
So we have
\begin{eqnarray*}
\theta(r)(m_{(n)}m_{\lambda})&=&\min_{\mu}\{\theta(r)(m_{\mu})\}\\
&=&\min_{\mu}\{\infty\} \\
&=&\infty\\
&=&\theta(r)(m_{(n)})\otimes_{\mathbb{L}}\theta(r)(m_{\lambda}).
\end{eqnarray*}

Let $\lambda, \lambda'\in\mathbb{P}$ both of which are not of the form $(n)$ for any $n\in\mathbb{N}_{>0}$.
Then by the same argument above, we have $\theta(r)(m_{\lambda}m_{\lambda'})=\theta(r)(m_{\lambda})\otimes_{\mathbb{L}}\theta(r)(m_{\lambda'})$.

So we have shown that $\theta(r)\in\WL$ for all $r\in[0, \infty]$.

Since $\Delta^{\times}(m_{(n)})=m_{(n)}\otimes m_{(n)}$ for all $n\in\mathbb{N}$, we have
\begin{eqnarray*}
(\theta(r)\otimes_{\WL}\theta(r'))(m_{(n)})&=&\theta(r)(m_{(n)})+\theta(r')(m_{(n)})\\
&=&n(r+r')\\
&=&\theta(r\otimes_{\mathbb{L}}r')(m_{(n)}),
\end{eqnarray*}
for any $r, r'\in\mathbb{L}$.

For any $\lambda\in\mathbb{P}$ which is not of the form $(n)$, $m_{(k)}\otimes m_{(k)}$ with any $k\in\mathbb{N}_{>0}$ does not appear in $\Delta^{\times}(m_{\lambda})$.
So $(\theta(r)\otimes_{\WL}\theta(r'))(m_{\lambda})=\infty=\theta(r\otimes_{\mathbb{L}}r')(m_{\lambda})$ for any $r, r'\in\mathbb{L}$.

By definition, $\theta(0)=\overline{0}$, and $\theta(r)\leq_{\WL}\theta(r')$ if $r\leq_{\mathbb{L}}r'$.
\end{proof}

\begin{rem}Let $r, r'\in\mathbb{L}$, then by definition
\[
\theta(\min\{r, r'\})(m_{\lambda})=
\begin{cases}
n\min\{r, r'\} & \text{if } \lambda = (n) \text{ for some } n \\
\infty & \text{otherwise}.
\end{cases}
\]
On the other hand, for any $n\in\mathbb{N}_{>0}$,
\begin{eqnarray*}
(\theta(r)\oplus_{\WL}\theta(r'))(m_{(n)})&=&\min\{\theta(r)(m_{(n)}), \theta(r')(m_{(n)})\}\\
&=&\min\{nr, nr'\} \\
&=&\theta(\min\{r, r'\})(m_{(n)}),
\end{eqnarray*}
since $\Delta^{+}(m_{(n)})=m_{(n)}\otimes 1+1\otimes m_{(n)}$.

However, by definition, since $\Delta^{+}(m_{(2, 1)})=m_{(2, 1)}\otimes 1+m_{(2)}\otimes m_{(1)}+m_{(1)}\otimes m_{(2)}+1\otimes m_{(2, 1)}$, we have
\[(\theta(r)\oplus_{\WL}\theta(r'))(m_{(2, 1)})=\min\{\infty, 2r+r', r+2r'\}.\]
So, in general, $\theta(\min\{r, r'\})(m_{\lambda})\neq(\theta(r)\oplus_{\WL}\theta(r'))(m_{\lambda})$, namely $\theta$ does not preserve addition and is not a rig homomorphism.

Note that, in contrast to the rig $\mathbb{L}$, the rig $\WL$ is not of characteristic one.
Indeed, $\overline{0}\oplus_{\WL}\overline{0}\neq\overline{0}$, which can be checked directly by $(\overline{0}\oplus_{\WL}\overline{0})(m_{(2,1)})=0$ and $\overline{0}(m_{(2, 1)})=\infty$.

Thus there does not exist a rig homomorphism from the rig $\mathbb{L}$ to the rig $\WL$, but we have a monoidal functor from the monoidal poset $\mathbb{L}$ to the monoidal poset $\WL$.
Note that the construction of the order $\leq_{\mathbb{L}}$ does not work well for rings as we have seen. 

Note also that, for a commutative ring $R$, there is a famous ring isomorphism $\mathbb{W}(R)\cong 1+tR[[t]]$.
Under this ring isomorphism, the addition in $\mathbb{W}(R)$ corresponds to the usual multiplication in $1+tR[[t]]$, and the multiplication in $\mathbb{W}(R)$ corresponds to an intricate multiplication in $1+tR[[t]]$.
In addition, categorical spectra are defined to be the stabilisations of $(\infty, \infty)$-categories instead of $(\infty, 0)$-categories, in which all morphisms are inverted.

\end{rem}

\begin{prop}The map $\tau:\WL\to\mathbb{L}, \tau(f)=f(m_{(1)})$ is a monoidal functor.
\end{prop}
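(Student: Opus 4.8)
The plan is to verify the three conditions defining a monoidal functor between the monoidal posets $\WL$ and $\mathbb{L}$: that $\tau$ is order preserving (hence a functor), that $\tau(\overline{0})=0$, and that $\tau(f\otimes_{\WL}g)=\tau(f)\otimes_{\mathbb{L}}\tau(g)$ for all $f,g\in\WL$. Since both $\WL$ and $\mathbb{L}$ are posets, all coherence isomorphisms are identities, so once these three facts are established there is nothing further to check. As in the proof that $\theta$ is monoidal, I will use repeatedly that $\otimes_{\mathbb{L}}$ is ordinary addition on $[0,\infty]$ and that $(1)=(1,0,0,\dots)$ has the special shape $(n)$, with $n=1$.

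The first two conditions are immediate. If $f\leq_{\WL}g$, then by the definition of $\leq_{\WL}$ we have $f(m_\lambda)\leq_{\mathbb{L}}g(m_\lambda)$ for every $\lambda\in\mathbb{P}$; specialising to $\lambda=(1)$ gives $\tau(f)=f(m_{(1)})\leq_{\mathbb{L}}g(m_{(1)})=\tau(g)$, so $\tau$ is a functor. For the unit, the definition of $\overline{0}$ together with the fact that $(1)$ is of the form $(n)$ gives $\tau(\overline{0})=\overline{0}(m_{(1)})=0$, which is the monoidal unit of $\mathbb{L}$.

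The only computation with any content is compatibility with $\otimes$, and I expect this to be the main step, although it is still short. I would invoke the identity $\Delta^{\times}(m_{(n)})=m_{(n)}\otimes m_{(n)}$ recorded above in the case $n=1$, so that in the expression $\Delta^{\times}(m_{(1)})=\sum_{\mu,\mu'}c^{(1)}_{\mu,\mu'}m_\mu\otimes m_{\mu'}$ the only term present is $\mu=\mu'=(1)$, with coefficient $1$. Hence the minimum in the defining formula for $\otimes_{\WL}$ runs over a single pair of partitions, and
\[(f\otimes_{\WL}g)(m_{(1)})=f(m_{(1)})+g(m_{(1)})=\tau(f)\otimes_{\mathbb{L}}\tau(g),\]
which is precisely $\tau(f\otimes_{\WL}g)=\tau(f)\otimes_{\mathbb{L}}\tau(g)$. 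This completes the verification.

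There is essentially no obstacle here: once $\Delta^{\times}(m_{(1)})=m_{(1)}\otimes m_{(1)}$ is known, the whole argument reduces to a single evaluation at $m_{(1)}$, entirely parallel to the computation in the proof for $\theta$. It is worth noting in passing — although the statement does not require it — that the same evaluation at $m_{(1)}$, using $\Delta^{+}(m_{(1)})=m_{(1)}\otimes 1+1\otimes m_{(1)}$ and $f(1)=0$ for any rig homomorphism $f\colon\Lambda\to\mathbb{L}$, gives $\tau(f\oplus_{\WL}g)=\min\{\tau(f),\tau(g)\}=\tau(f)\oplus_{\mathbb{L}}\tau(g)$, so that $\tau$ is in fact a rig homomorphism $\WL\to\mathbb{L}$; this is consistent with the earlier remark, which only excludes rig homomorphisms in the direction $\mathbb{L}\to\WL$. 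One also reads off $\tau\circ\theta=\mathrm{id}_{\mathbb{L}}$ from $\theta(r)(m_{(1)})=r$, exhibiting $\mathbb{L}$ as a retract of $\WL$.
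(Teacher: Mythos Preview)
Your proof is correct and follows essentially the same approach as the paper: both verify order preservation directly from the definition of $\leq_{\WL}$, compute $\tau(\overline{0})=0$ from the definition of $\overline{0}$, and deduce compatibility with the monoidal product from $\Delta^{\times}(m_{(1)})=m_{(1)}\otimes m_{(1)}$. Your version is more explicit and adds the side remarks about $\tau$ preserving $\oplus$ and satisfying $\tau\circ\theta=\mathrm{id}_{\mathbb{L}}$, but the core argument is identical.
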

\begin{proof}Since $\Delta^{\times}(m_{(1)})=m_{(1)}\otimes m_{(1)}$, for any $f, g\in\WL$, we have
\begin{eqnarray*}
\tau(f\otimes_{\WL}g)&=&(f\otimes_{\WL}g)(m_{(1)})\\
&=&f(m_{(1)})+g(m_{(1)})\\
&=&\tau(f)\otimes_{\mathbb{L}}\tau(g).
\end{eqnarray*}
Again by definition, $\tau(\overline{0})=0$, and $\tau(f)\leq_{\mathbb{L}}\tau(g)$ if $f\leq_{\WL}g$.
\end{proof}

These functors $\theta$ and $\tau$ induces functors again denoted by $\theta:\mathbb{L}\mathchar`-\mathbf{Cat}\to\WL\mathchar`-\mathbf{Cat}$ and $\tau:\WL\mathchar`-\mathbf{Cat}\to\mathbb{L}\mathchar`-\mathbf{Cat}$ respectively.

By construction, we have the following.

\begin{prop}$\theta:\mathbb{L}\mathchar`-\mathbf{Cat}\to\WL\mathchar`-\mathbf{Cat}$ is fully faithful.
\end{prop}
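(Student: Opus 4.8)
The plan is to verify that the composite $\tau \circ \theta \colon \mathbb{L} \to \mathbb{L}$ is the identity monoidal functor, and then deduce full faithfulness of the induced functor on enriched categories from this together with the fact that a fully faithful monoidal functor between monoidal posets induces a fully faithful functor on enriched categories. Concretely, for $r \in \mathbb{L}$ we have $(\tau \circ \theta)(r) = \theta(r)(m_{(1)}) = 1 \cdot r = r$, since $(1)$ is the partition of $1$ and the formula defining $\theta$ gives $\theta(r)(m_{(n)}) = nr$. Hence $\tau \circ \theta = \mathrm{id}_{\mathbb{L}}$ on the nose.

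Next I would unwind what full faithfulness of $\theta \colon \mathbb{L}\text{-}\mathbf{Cat} \to \mathbb{W}(\mathbb{L})\text{-}\mathbf{Cat}$ means. Given $\mathbb{L}$-categories $X$ and $Y$ with hom-maps $d_X, d_Y$, the functor $\theta$ sends them to $\mathbb{W}(\mathbb{L})$-categories with the same underlying sets and hom-maps $\theta \circ d_X$, $\theta \circ d_Y$. A $\mathbb{W}(\mathbb{L})$-enriched functor $F \colon \theta(X) \to \theta(Y)$ is a map of underlying sets such that $\theta(d_X(x,x')) \leq_{\mathbb{W}(\mathbb{L})} \theta(d_Y(Fx, Fx'))$ for all $x, x'$, while an $\mathbb{L}$-enriched functor $X \to Y$ is a map of underlying sets with $d_X(x,x') \leq_{\mathbb{L}} d_Y(Fx, Fx')$. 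Since the underlying-set data are literally the same, faithfulness is automatic; the content is fullness, i.e.\ that the two inequality conditions are equivalent. That $d_X(x,x') \leq_{\mathbb{L}} d_Y(Fx,Fx')$ implies $\theta(d_X(x,x')) \leq_{\mathbb{W}(\mathbb{L})} \theta(d_Y(Fx,Fx'))$ is immediate from the last line of the proof of the theorem on $\theta$, which records that $\theta$ is order-preserving. For the converse, apply $\tau$: since $\tau$ is order-preserving and $\tau \circ \theta = \mathrm{id}$, the inequality $\theta(d_X(x,x')) \leq_{\mathbb{W}(\mathbb{L})} \theta(d_Y(Fx,Fx'))$ yields $d_X(x,x') \leq_{\mathbb{L}} d_Y(Fx,Fx')$ after applying $\tau$.

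I would then note the two remaining bookkeeping points: first, that $\theta$ is genuinely injective on objects (i.e.\ on $\mathbb{L}$-categories), which again follows from $\tau \circ \theta = \mathrm{id}$ so that $\theta(X)$ determines $X$; and second, that the bijection on hom-sets just established is compatible with composition of enriched functors, which is trivial since composition is computed on underlying sets. Assembling these gives the claim. The only place requiring a moment's care — and the main (mild) obstacle — is making sure the comparison of enrichment conditions is stated at the level of the partial orders $\leq_{\mathbb{L}}$ and $\leq_{\mathbb{W}(\mathbb{L})}$ rather than at the level of the (non-preserved) addition, so that the retraction argument via $\tau$ applies cleanly; once one works purely order-theoretically, everything reduces to the identities $\tau\circ\theta=\mathrm{id}$, $\theta$ order-preserving, $\tau$ order-preserving, all already available.
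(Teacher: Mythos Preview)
Your argument is correct. The paper itself provides no proof beyond the line ``By construction, we have the following,'' so you have simply spelled out what the paper leaves implicit; the retraction identity $\tau\circ\theta=\mathrm{id}_{\mathbb{L}}$ together with the already-established order-preservation of $\theta$ and $\tau$ is precisely the natural way to unpack that phrase, since for posetal bases an enriched functor is just a set map satisfying an order condition on homs. (Your remark on injectivity on objects is not needed for full faithfulness, but it is correct and harmless.)
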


For $(X, d)\in\mathbb{L}\mathchar`-\mathbf{Cat}$ we denote $\theta(X, d)=(X, \tilde{d})\in\WL\mathchar`-\mathbf{Cat}$.
As we saw above, for any $n\in\mathbb{N}_{>0}$ we have $(X, \tilde{d}_{(n)})\in\mathbb{L}\mathchar`-\mathbf{Cat}$ such that for any $x, x'\in X$, $\tilde{d}_{(n)}(x, x')=nd(x, x')$.
Thus we may obtain a sequence $\{(X, \tilde{d}_{(n)})\}_{n\in\mathbb{N}_{>0}}$ of $\mathbb{L}$-categories, linearly and uniformly expanding over discrete time, with the initial state $(X, \tilde{d}_{(1)})=(X, d)$.

Similarly, for any $\lambda\in\mathbb{P}$ which is not of the form $(n)$ for any $n\in\mathbb{N}_{>0}$, and for any $x, x'\in X$, we have $\tilde{d}_{\lambda}(x, x')=\infty$.
Note that $(X, \tilde{d}_{\lambda})$ is no longer an $\mathbb{L}$-category, but it is a kind of discrete space.
If we consider the Plancherel growth process, the probability that a randomly chosen $\lambda$ is of the form $(|\lambda|)$ will tend to decrease as $|\lambda|$ becomes larger.
So it becomes increasingly difficult for $(X, \tilde{d}_{\lambda})$ to be an $\mathbb{L}$-category as $|\lambda|$ becomes larger.
Note that, if a partition $\lambda$ is not of the form $(|\lambda|)$, then $(X, \tilde{d}_{\lambda})$ is not an $\mathbb{L}$-category and the subsequent one also can not be an $\mathbb{L}$-category, since the transition probability from $(\lambda_1, \lambda_2, \dots, \lambda_k)$ with $\lambda_2\neq 0$ to $(|\lambda|+1)$ is $0$.

Note also that in this case $\tilde{d}_{n}=\tilde{d}_{(n)}$ for all $n\in\mathbb{N}_{>0}$.
So we can not distinguish $\{(X, d_{(n)})\}_{n\in\mathbb{N}_{>0}}$ and  the non-stochastic sequence $\{(X, \tilde{d}_{n})\}_{n\in\mathbb{N}_{>0}}$.

Conversely let $(Y, d)\in\WL$.
By taking $\tau:\WL\mathchar`-\mathbf{Cat}\to\mathbb{L}\mathchar`-\mathbf{Cat}$, we have its initial-state $\mathbb{L}$-category $(Y, d_{(1)})$.

\begin{definition}We define a subset $\WL_{l}$ of $\WL$ to be
\[\{f\in\WL|nf(m_{(1)})\leq_{\mathbb{L}}f(m_{(n)}), \forall n\in\mathbb{N}_{>0}\}.\]
\end{definition}

\begin{prop}The monoidal category structure on $\WL$ restricts to $\WL_{l}$ and the monoidal functor $\tau:\WL\to\mathbb{L}$ restricts to the monoidal functor $\tau:\WL_{l}\to\mathbb{L}$.
\end{prop}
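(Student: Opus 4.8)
The plan is to use that $\WL_{l}$ is, by construction, a full subposet of the poset $\WL$, so that the assertion ``the monoidal structure restricts'' reduces to exactly two points: that the monoidal unit $\overline{0}$ lies in $\WL_{l}$, and that $\WL_{l}$ is closed under $\otimes_{\WL}$. Once these hold, the unit axiom, the associativity axiom, and the bifunctoriality of the tensor product are all inherited from $\WL$ (which is a monoidal poset by the proposition above), and the coherence isomorphisms are automatic since the ambient category is thin; hence $\WL_{l}$ is itself a monoidal poset and the inclusion is strict monoidal. Moreover, once $\WL_{l}$ is a monoidal subposet there is nothing new to verify for $\tau$: the relations $\tau(f\otimes_{\WL}g)=\tau(f)\otimes_{\mathbb{L}}\tau(g)$, $\tau(\overline{0})=0$, and the monotonicity of $\tau$ have already been proved for all of $\WL$, so they continue to hold on the subset $\WL_{l}$ with the restricted order and tensor. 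Thus the whole statement comes down to the two closure claims.

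First I would check $\overline{0}\in\WL_{l}$. By the definition of $\overline{0}$ we have $\overline{0}(m_{(1)})=0$ and $\overline{0}(m_{(n)})=0$ for every $n\in\mathbb{N}_{>0}$, so $n\,\overline{0}(m_{(1)})=0=\overline{0}(m_{(n)})$, and in particular $n\,\overline{0}(m_{(1)})\leq_{\mathbb{L}}\overline{0}(m_{(n)})$, as required.

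The only genuine computation is closure under $\otimes_{\WL}$, and its key input is the identity $\Delta^{\times}(m_{(n)})=m_{(n)}\otimes m_{(n)}$ for all $n\in\mathbb{N}_{>0}$ (already used above in the proof that each $(X,d_{(n)})$ is an $\mathbb{L}$-category, and in particular valid for $n=1$). By the defining formula for $\otimes_{\WL}$ this yields $(f\otimes_{\WL}g)(m_{(n)})=f(m_{(n)})+g(m_{(n)})$ for every $n\geq 1$. Now take $f,g\in\WL_{l}$; unwinding $\leq_{\mathbb{L}}$ into the usual order, membership means $f(m_{(n)})\leq n\,f(m_{(1)})$ and $g(m_{(n)})\leq n\,g(m_{(1)})$ for all $n$. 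Adding these and using the identity above,
\[
(f\otimes_{\WL}g)(m_{(n)})=f(m_{(n)})+g(m_{(n)})\leq n\,f(m_{(1)})+n\,g(m_{(1)})=n\bigl(f(m_{(1)})+g(m_{(1)})\bigr)=n\,(f\otimes_{\WL}g)(m_{(1)}),
\]
that is, $n\,(f\otimes_{\WL}g)(m_{(1)})\leq_{\mathbb{L}}(f\otimes_{\WL}g)(m_{(n)})$, so $f\otimes_{\WL}g\in\WL_{l}$. Together with $\overline{0}\in\WL_{l}$ this proves the first assertion, and the statement about $\tau$ then follows from the remarks in the first paragraph.

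I do not expect a real obstacle here: the entire content is the behaviour of $\Delta^{\times}$ on the monomial symmetric functions $m_{(n)}$ indexed by one-row partitions, which collapses $(f\otimes_{\WL}g)(m_{(n)})$ to a pointwise sum and thereby makes the inequality defining $\WL_{l}$ manifestly additive. The only thing to be careful about is consistently tracking the direction of $\leq_{\mathbb{L}}$ when translating between the quantale order and the usual order on $[0, \infty]$.
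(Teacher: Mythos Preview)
Your proposal is correct and follows essentially the same route as the paper: the paper's proof also notes that the order restricts trivially, that $\overline{0}\in\WL_{l}$ by definition, and then uses $\Delta^{\times}(m_{(n)})=m_{(n)}\otimes m_{(n)}$ to get $(f\otimes_{\WL}g)(m_{(n)})=f(m_{(n)})+g(m_{(n)})$ and add the defining inequalities. You are a bit more explicit than the paper about why the claim for $\tau$ is automatic once the first part is established, but the substance is identical.
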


\begin{proof}We need to show the first claim.
It is clear that the partially order structure, hence the category structure, on $\WL$ restricts to $\WL_{l}$.
By definition, $\overline{0}\in\WL_{l}$.
Let $f, g\in\WL_{l}$.
Then by definition, for all $n\in\mathbb{N}_{>0}$,
\begin{eqnarray*}
(f\otimes_{\mathbb{W}(\mathbb{L})}g)(m_{(n)})&=&f(m_{(n)})+g(m_{(n)})\\
&\geq_{\mathbb{L}}&n(f(m_{(1)})+g(m_{(1)}))\\
&=&n(f\otimes_{\mathbb{W}(\mathbb{L})}g)(m_{(1)}).
\end{eqnarray*}
\end{proof}
Also by definition we have the following.
\begin{prop}The monoidal functor $\theta:\mathbb{L}\to\WL$ factors through the monoidal functor $\theta:\mathbb{L}\to\WL_{l}$.
\end{prop}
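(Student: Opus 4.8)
The plan is to show that the image of the monoidal functor $\theta:\mathbb{L}\to\WL$ already lies inside the sub-monoidal-poset $\WL_{l}$, so that $\theta$ corestricts to a map $\mathbb{L}\to\WL_{l}$, and then to note that this corestriction inherits all the structure making it monoidal.

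First I would unwind the definition of $\WL_{l}$ applied to $\theta(r)$ for an arbitrary $r\in\mathbb{L}=[0,\infty]$. By the defining formula for $\theta$, we have $\theta(r)(m_{(1)})=1\cdot r=r$ and $\theta(r)(m_{(n)})=nr$ for every $n\in\mathbb{N}_{>0}$. Hence $n\,\theta(r)(m_{(1)})=nr=\theta(r)(m_{(n)})$, so the inequality $n\,\theta(r)(m_{(1)})\leq_{\mathbb{L}}\theta(r)(m_{(n)})$ required for membership in $\WL_{l}$ holds trivially (in fact with equality) for all $n$. Therefore $\theta(r)\in\WL_{l}$ for every $r$, and $\theta$ restricts to a well-defined map $\theta:\mathbb{L}\to\WL_{l}$; it still sends $0$ to $\overline{0}$ and is order-preserving, these being restrictions of the corresponding facts for $\theta:\mathbb{L}\to\WL$ established earlier.

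It then remains to observe that this corestriction is monoidal. This is immediate from the preceding proposition, which asserts that the monoidal structure on $\WL$ restricts to $\WL_{l}$ and that $\tau$ restricts accordingly: the inclusion $\iota:\WL_{l}\hookrightarrow\WL$ is a strict monoidal functor, and since $\theta=\iota\circ(\theta:\mathbb{L}\to\WL_{l})$ with $\iota$ faithful and reflecting the tensor product, the data witnessing monoidality of $\theta:\mathbb{L}\to\WL$ already live in $\WL_{l}$. Concretely, because everything in sight is a monoidal \emph{poset}, monoidality of $\theta:\mathbb{L}\to\WL_{l}$ amounts only to $\theta(0)=\overline{0}$ and $\theta(r)\otimes_{\WL}\theta(r')=\theta(r\otimes_{\mathbb{L}}r')$ for $\theta(r),\theta(r')\in\WL_{l}$, both of which were verified in the proof that $\theta:\mathbb{L}\to\WL$ is monoidal and are preserved verbatim under restriction.

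I do not expect a genuine obstacle here: the substantive content is the one-line computation $\theta(r)(m_{(n)})=nr=n\,\theta(r)(m_{(1)})$, and the only point requiring mild care is the bookkeeping that corestriction along the structure-reflecting monoidal inclusion $\WL_{l}\hookrightarrow\WL$ preserves monoidality, which for monoidal posets reduces to the triviality that the relevant (in)equalities among elements of $\WL_{l}$ are the same whether evaluated in $\WL_{l}$ or in $\WL$.
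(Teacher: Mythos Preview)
Your proposal is correct and matches the paper's approach, which simply states that the result holds ``by definition.'' Your explicit verification that $\theta(r)(m_{(n)})=nr=n\,\theta(r)(m_{(1)})$ is exactly the unwinding of that definition, and the remaining bookkeeping about corestriction along the monoidal inclusion $\WL_{l}\hookrightarrow\WL$ is a faithful elaboration of what the paper leaves implicit.
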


\begin{prop}The monoidal functor $\theta:\mathbb{L}\to\WL_{l}$ is left adjoint to $\tau:\WL_{l}\to\mathbb{L}$.
\end{prop}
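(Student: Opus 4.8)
The plan is to exploit the fact that $\mathbb{L}$ and $\WL_{l}$ are (monoidal) posets: an adjunction between the order-preserving maps $\theta$ and $\tau$ is nothing but a Galois connection, so it suffices to prove that for every $r\in\mathbb{L}$ and every $f\in\WL_{l}$,
\[\theta(r)\leq_{\WL} f\iff r\leq_{\mathbb{L}}\tau(f).\]
We already know from the preceding proposition that $\theta(r)\in\WL_{l}$ and that $\theta$, $\tau$ are monotone monoidal functors, so this single equivalence gives the adjunction (indeed a monoidal one) for free.

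First I would unwind the left-hand side. By the definitions of $\leq_{\WL}$ and of $\theta$, the statement $\theta(r)\leq_{\WL} f$ says that $f(m_{\lambda})\leq\theta(r)(m_{\lambda})$ in the usual order on $[0,\infty]$ for every $\lambda\in\mathbb{P}$. For a partition $\lambda$ not of the form $(n)$ this reads $f(m_{\lambda})\leq\infty$, which is automatic; hence $\theta(r)\leq_{\WL} f$ is equivalent to the family of inequalities $f(m_{(n)})\leq nr$ for all $n\in\mathbb{N}_{>0}$ (usual order). Since $\tau(f)=f(m_{(1)})$ and $r\leq_{\mathbb{L}}\tau(f)$ means $f(m_{(1)})\leq r$, the equivalence to be established reduces to
\[\bigl(\forall n\in\mathbb{N}_{>0}:\ f(m_{(n)})\leq nr\bigr)\iff f(m_{(1)})\leq r.\]

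The forward implication is immediate by taking $n=1$. For the converse, suppose $f(m_{(1)})\leq r$; since $f\in\WL_{l}$, by definition $nf(m_{(1)})\leq_{\mathbb{L}}f(m_{(n)})$, i.e. $f(m_{(n)})\leq nf(m_{(1)})$ in the usual order, whence $f(m_{(n)})\leq nf(m_{(1)})\leq nr$ for every $n$. This converse is the only substantive point, and it is exactly the reason one restricts from $\WL$ to $\WL_{l}$: on all of $\WL$ the counit $\theta(\tau(f))\leq_{\WL} f$ would fail, because $\theta(\tau(f))(m_{(n)})=nf(m_{(1)})$ need not dominate $f(m_{(n)})$ in general. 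I expect the write-up to be short; the main care needed is simply keeping track of the reversal between $\leq_{\mathbb{L}}$ and the usual order on $[0,\infty]$ when translating the two sides.
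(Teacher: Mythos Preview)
Your proof is correct and follows essentially the same approach as the paper's: both establish the Galois-connection equivalence $\theta(r)\leq_{\WL} f \iff r\leq_{\mathbb{L}}\tau(f)$ by unwinding the definition of $\theta$ (so that only the conditions at $m_{(n)}$ matter) and invoking the defining inequality of $\WL_{l}$ for the converse direction. Your additional remark explaining why the restriction to $\WL_{l}$ is necessary is a nice touch but does not change the argument.
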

\begin{proof}
Let $r\in\mathbb{L}$ and $f\in\WL_{l}$.
Suppose $\theta(r)\leq_{\WL}f$.
Then by definition for any $\lambda\in\mathbb{P}$, we have $\theta(r)(m_{\lambda})\leq_{\mathbb{L}}f(m_{\lambda})$.
Again by definition,
\[
\theta(r)(m_{\lambda})=
\begin{cases}
nr & \text{if } \lambda = (n) \text{ for some } n \\
\infty & \text{otherwise.}
\end{cases}
\]
So we have $r\leq_{\mathbb{L}}f(m_{(1)})=\tau(f)$.

Conversely, suppose $r\leq_{\mathbb{L}}\tau(f)$.
Since $\tau(f)=f(m_{(1)})$, for all $n\in\mathbb{N}_{>0}$, we have $nr\leq_{\mathbb{L}}nf(m_{(1)})$.
Since $f\in\WL_{l}$, we have $nr\leq_{\mathbb{L}}f(m_{(n)})$.
By the definition of $\leq_{\WL}$ and that of $\theta(r)$, we have $\theta(r)\leq_{\WL}f$.
\end{proof}

\begin{rem}Let $X$ be a category enriched over $\WL_{l}$.
Then by definition, for all $x, x'\in X$ and $n\in\mathbb{N}_{>0}$, we have $nd_{(1)}(x, x')\geq d_{(n)}(x, x')$.
In other words, the identity map on $X$ gives an $n$-Lipschitz map $(X, d_{(1)})\to(X, d_{(n)})$.
\end{rem}

The Lawvere quantale $\mathbb{L}$ is an example of (commutative) quantales.
One might consider the Witt vectors in other quantales as well.
Also, $\Lambda$ is an example of composition algebras, so one might choose other composition algebras.
It would be worth emphasizing that the combinatorial or probabilistic aspect of $\mathbb{P}$ has contributed to the theory of Witt vectors.

\end{document}